\documentclass[12pt]{article}%,a4]{article}
\usepackage{setspace}
\usepackage{amsmath,amssymb}
\usepackage[bf,small,tableposition=top]{caption}
\usepackage{subfig}
\usepackage{amsthm}
\usepackage{lineno}
\usepackage[figuresright]{rotating}
\usepackage{enumerate}
\usepackage{authblk}
\usepackage[T1]{fontenc}
\usepackage[utf8]{inputenc}
\usepackage{graphics}
\usepackage{graphicx}
\usepackage{multicol}
\usepackage{lscape}
\usepackage[dvipsnames]{xcolor}
\usepackage{mathrsfs}
\usepackage{epstopdf}
\usepackage{hyperref}
\usepackage{textcomp}
\usepackage{listings}
\usepackage{float}

\usepackage{tipa}
\usepackage{verbatim}
\usepackage{diagbox}
\usepackage{multirow,bigdelim,dcolumn,booktabs}
\newtheorem{prop}{Proposition}
\newtheorem{thm}{Theorem}%[section]
\theoremstyle{definition}

\theoremstyle{remark}
 \newtheorem{cor}{Corollary}

 \usepackage{multirow}

\usepackage{array} % Include the array package

\newcolumntype{H}{>{\setbox0=\hbox\bgroup}c<{\egroup}@{}}
\usepackage{booktabs}
\usepackage[authoryear]{natbib}
\title{Laplace Transform driven Stein-type Goodness-of-fit Tests for Pareto Distribution}

\author
{Deepesh Bhati$^{1}$, Sakshi Khandelwal$^{1}$\footnote{Corresponding author: gsakshi1506@gmail.com} \\
\normalsize{$^{1}$Department of Statistics, Central University of Rajasthan, Kishangarh, India} \\
}

\date{}

\begin{document} 

% Double-space the manuscript.

\baselineskip13pt

\maketitle 

% Place your aract within the special {sciabstract} environment.

\begin{abstract}
The Pareto distribution plays a crucial role in various disciplines, necessitating robust goodness-of-fit tests for its validation. This article introduces a novel tests based on Stein’s characterization and the Laplace transform, offering a fresh perspective on model assessment. We establish the asymptotic properties of the proposed test and evaluate its empirical performance against existing methods in terms of size and power. Our findings demonstrate that the new test often outperforms or performs comparably to established tests. In addition, real data applications illustrate its practical utility.
\end{abstract}

\noindent \textbf{Keywords:} Goodness-of-Fit; Pareto distribution; Laplace Transform; Stein's Characterization. \\

\noindent \textbf{Mathematics Subject Classifications:} 62F03; 62F05.

\section{Introduction} 
The Pareto distribution, named after the Italian economist \citet{pareto1896cours}, is a prominent member of the class of heavy-tailed distributions. Initially developed to describe the distribution of wealth in society, where a small fraction of the population controls the majority of resources, it has since found applications in diverse areas including economics (\citet{simon1955class}), finance (\citet{rydberg2000realistic}), insurance (\citet{embrechts1997modelling}), and environmental science (\citet{katz2002statistics}). Its characteristic heavy tail makes it particularly suitable for modeling extreme events, rare occurrences, and other phenomena where large deviations play a crucial role.\\
\indent Given its central role in modeling and risk assessment, verifying the suitability of the Pareto distribution for a dataset is crucial. This has led to the development of numerous goodness-of-fit (gof) tests, which assess whether empirical data are consistent with the Pareto law. These include classical methods based on the empirical distribution function (\citet{choulakian2001goodness}), probability plots (\citet{castillo2012extreme}), and empirical characteristic functions (\citet{ndwandwe2023new}), to name just a few. A particularly fruitful approach has been the use of characterization-based tests, which exploit properties uniquely satisfied by the Pareto distribution (see \citet{malik1970characterization, ahsanullah1974characterization, nofal2017new, Ngatchou}). These methods offer strong discriminatory power by focusing on features intrinsic to the distribution.\\
\indent One such modern framework is Stein’s method, a powerful tool originally introduced for normal approximation (\citet{stein1972bound}). It characterizes a distribution via a differential equation that is uniquely satisfied by the target distribution. This method has since been extended to non-normal cases, including distributions on the entire real line (e.g., Gaussian, Laplace), semi-bounded distributions (e.g., Pareto, Gamma, Exponential, Weibull), and bounded distributions (e.g., Uniform, Beta) (see \citet{betsch2021fixed}). Notably, these extensions have led to developments for Lévy (\citet{kumari2023goodness}), Weibull (\citet{ebner2023goodness}), and Pareto families (\citet{bhati2025new}), enabling the construction of robust, tailored goodness-of-fit tests. In the context of the Pareto distribution, Stein-type characterizations leverage its density structure to yield insightful test statistics.\\
\indent Complementary to this, the Laplace transform has long been used in statistical theory for its ability to simplify complex problems. It offers compact representations of distributions, facilitates moment-based analysis, and enables the derivation of identities that are useful in estimation and testing (\citet{widder1941laplace}). In recent years, the empirical Laplace transform has gained attraction in gof testing, proving useful in constructing asymptotically powerful tests for distributions with heavy tails or skewness (see \citet{baringhaus1991class, henze1992new, jimenez2009goodness}).\\
\noindent Motivated by these developments, we propose a novel gof test for the Pareto distribution that integrates Stein’s characterization with the Laplace transform. This hybrid approach combines the differential structure inherent in Stein’s method with the integral-based flexibility of the Laplace transform. To the best of our knowledge, this is the first such test for the Pareto distribution, filling a gap in the existing literature and offering potential benefits in applications spanning economics, finance, and reliability engineering.\\
\indent Despite the wide array of available tests, the need for new procedures remains strong. Different characterizations capture different aspects of a distribution’s behavior, and a single counter-example may be enough to challenge a distributional assumption (\citet{nikitin1995asymptotic, Ngatchou}). Our proposed method contributes to this landscape by offering a new perspective that blends analytical rigor with practical effectiveness.\\
\indent In this work, we consider a composite null hypothesis where the shape parameter of the Pareto distribution is unknown. Let $\mathcal{P} = \{ P(\alpha, \beta) \mid \alpha > 0, \beta > 0 \}$ denote the family of Pareto distributions with shape $\alpha$ and scale $\beta$, defined by the cumulative distribution function and density function:

\begin{equation}
\label{Pab}
F(x;\alpha,\beta) = 
    \begin{cases}
      1-\left(\dfrac{x}{\beta}\right)^{-\alpha}, & x \geq \beta, \\
      0, & x < \beta,
    \end{cases}
\quad , \quad
f(x;\alpha,\beta) = 
    \begin{cases}
     \alpha \beta^{\alpha} x^{-\alpha-1}, & x \geq \beta,\\
      0, & \text{otherwise}.
    \end{cases}
\end{equation}
\noindent In many practical scenarios, the scale parameter $\beta$ is known or can be standardized to 1. This leads to the simplified form:
\begin{equation}
\label{Pa}
F(x;\alpha) = 
    \begin{cases}
      1-\dfrac{1}{x^\alpha}, & x \geq 1,\\
      0, & \text{otherwise}.
    \end{cases}
\quad , \quad
f(x;\alpha) = 
    \begin{cases}
     \frac{\alpha}{x^{\alpha+1}}, & x\geq1,\\
      0, & \text{otherwise}.
    \end{cases}
\end{equation}
\noindent We denote this standard form by $P(\alpha)$ and assume $\beta=1$ throughout, without loss of generality. Given a random sample $X_1, \ldots, X_n$ drawn independent and identically distributed (iid) from an unknown distribution $\mathbb{P}_X$, we aim to test the hypothesis:
\begin{equation}
\mathscr{H}_0: \mathbb{P}_X \in \mathcal{P},
\end{equation}
against general alternatives.\\
\indent In what follows, we develop a new class of gof tests grounded in a novel Stein-type characterization of the Pareto distribution, expressed via the Laplace transform. This method builds upon recent work in Weibull gof testing (\citet{ebner2023goodness}), but adapts the idea to suit the Pareto context. To evaluate the performance of the proposed test, we analyze its power under a variety of alternatives, following standard procedures in gof test evaluation.\\
\indent The rest of the paper is structured as follows: Section \ref{Fusion} presents the underlying characterization motivating our test. Section \ref{proposed} introduces the proposed test statistic, while Section \ref{limit} investigates its asymptotic properties. In Section \ref{MCstudy}, we conduct a Monte Carlo study comparing our test with existing ones. Section \ref{data} demonstrates practical applications to real data, and Section \ref{conclusions} concludes the paper.\\
\indent For clarity, we define notation used throughout: all limits are taken as $n \to \infty$; $\stackrel{d}{\longrightarrow}$ and $\stackrel{P}{\longrightarrow}$ denote convergence in distribution and in probability, respectively; $\mathbb{E}$ and $\mathbb{V} \text{ar}$ denote expectation and variance; $o_p(1)$ refers to a term that converges to zero in probability; and $\mathbb{I}(A)$ is the indicator function of set $A$.

%-----------------------------------
%\section{Amalgamation of Stein's Characterization and Laplace Transform} \label{Fusion}
\section{Amalgamation of Laplace Transform in Stein’s Characterization} \label{Fusion}
A common approach in constructing gof tests, which has become popular in recent years, is to utilize a characterization of the family of distributions being considered. In this article we have fused fixed-point characterization of Pareto distribution (see \citet{betsch2021fixed}) with the Laplace transform. 

Theorem 2 of \citet{betsch2021fixed} states that, if $f: \mathbb{R} \rightarrow [0,\infty)$ be a probability density function with $\text{spt}(f) = [L,\infty), \, L> -\infty $, then $X \sim f$ if, and only if, 
\begin{equation}
    f_X(s)= \mathbb{E} \left( -\frac{f'(x,\alpha)}{f(x,\alpha)} \mathbb{I}(X>s) \right), \, s>L.
\end{equation}

\noindent If $X$ is from Pareto distribution with scale parameter 1, then this fixed-point characterization reduces to 
\begin{equation} \label{FPC}
   f_X(s)= \mathbb{E} \left( \frac{\alpha+1}{X} \mathbb{I}(X>s) \right), \, s>1. 
\end{equation}

\noindent The Laplace transform for any distribution function is defined as:
\begin{align} \label{LT}
\mathscr{L}_X(t) &= \mathbb{E} (\exp(-Xt)) \nonumber \\
     &= \int_x \exp(-st) \cdot f_X(s) \, ds
\end{align}

\noindent To fuse Stein's characterization with Laplace transform we will replace the density function in the Equation \eqref{LT} to the Equation \eqref{FPC}. 
This fusion gives us 
\begin{align} \label{NC}
       \mathscr{L}_X(t) &= \int_1^\infty \exp(-st) \cdot \mathbb{E} \left( \frac{\alpha+1}{X} \mathbb{I}(X>s) \right) \, ds, \nonumber \\
       &= \mathbb{E} \bigg( \frac{\alpha+1}{X} \int_1^\infty \exp(-ts) \mathbb{I} (X>s) \, ds\bigg), \nonumber \\
       &= \mathbb{E} \bigg( \frac{\alpha+1}{X} \cdot  \frac{\exp(-t)-\exp(-tX)}{t} \bigg).
\end{align}

\noindent Hence, to develop a new gof test we will make use of the theorem which is given as follows. 
\begin{thm}
 Let $\alpha$>0 and $X$ be a positive random variable with differentiable density $f$ and Laplace transform  $\mathscr{L}_X$ satisfying $\mathbb{E} \bigg| \frac{ X \frac{d}{dx} f(x) |_X}{f(X)}  \bigg| < \infty$. Then $X$ has a $P(\alpha)$ distribution if and only if 
 \begin{equation}
     \mathscr{L}_X(t) = \mathbb{E} \bigg( \frac{\alpha+1}{X} \cdot  \frac{\exp(-t)-\exp(-tX)}{t} \bigg)
 \end{equation}
 for each $t>0$. 
\end{thm}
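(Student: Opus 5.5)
The plan is to prove the two directions separately. Both rest on the fixed-point characterization \eqref{FPC} (Theorem 2 of \citet{betsch2021fixed}) and on the uniqueness theorem for Laplace transforms of integrable functions on $(0,\infty)$.

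Throughout I read the kernel as the value of $\int_1^\infty e^{-ts}\mathbb{I}(X>s)\,ds$. This integral equals $\frac{e^{-t}-e^{-tX}}{t}$ when $X>1$ and $0$ otherwise. So the expression in the theorem should be understood with $X$ replaced by $\max(X,1)$ inside the exponential. Under the null this makes no difference, since $X\ge 1$ almost surely.

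\textbf{Necessity.} Suppose $X\sim P(\alpha)$.
\begin{enumerate}[(i)]
\item \eqref{FPC} holds for every $s>1$, and $f_X$ vanishes on $(0,1)$, so $\mathscr{L}_X(t)=\int_1^\infty e^{-ts}f_X(s)\,ds$.
\item Substitute \eqref{FPC} into this integral.
\item Interchange $\mathbb{E}$ and $\int ds$ by Tonelli. This is legitimate because the integrand $e^{-ts}\frac{\alpha+1}{X}\mathbb{I}(X>s)$ is nonnegative.
\item Evaluate the inner $ds$-integral explicitly. This reproduces exactly the computation in \eqref{NC}.
\end{enumerate}

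\textbf{Sufficiency.} Suppose the identity holds for all $t>0$.
\begin{enumerate}[(i)]
\item Define $g(s)=\mathbb{E}\big(\frac{\alpha+1}{X}\mathbb{I}(X>s)\big)\mathbb{I}(s>1)$. This function is nonnegative and nonincreasing on $(1,\infty)$, so by Tonelli again the right-hand side of the identity equals $\int_0^\infty e^{-ts}g(s)\,ds$.
\item Show that this integral is finite. For $X>1$, the bound $\frac{e^{-t}-e^{-tX}}{tX}\le \frac{e^{-t}}{t}$ shows it is finite for every $t>0$.
\item The left-hand side is $\int_0^\infty e^{-ts}f(s)\,ds$. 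The two nonnegative functions $f$ and $g$ therefore have the same Laplace transform on $(0,\infty)$.
\item By uniqueness (Lerch's theorem), $f=g$ almost everywhere on $(0,\infty)$.
\end{enumerate}

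Two consequences follow from $f=g$ a.e.
\begin{enumerate}[(a)]
\item Since $g\equiv 0$ on $(0,1)$, the density $f$ vanishes a.e. there. By continuity of $f$ (it is differentiable), $f\equiv 0$ on $(0,1)$, so $X\ge 1$ a.s. and $\mathrm{spt}(f)\subseteq[1,\infty)$.
\item On $(1,\infty)$, the function $g$ is continuous. Indeed, $X$ has a density, so $s\mapsto\mathbb{E}(\frac{\alpha+1}{X}\mathbb{I}(X>s))$ is continuous by dominated convergence, with $\frac{\alpha+1}{X}\le \alpha+1$ once $X\ge1$. Hence $f(s)=g(s)$ for every $s>1$, which is exactly \eqref{FPC}.
\end{enumerate}

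Applying the converse half of Theorem 2 of \citet{betsch2021fixed} with $L=1$ and the Pareto score $-f'(x,\alpha)/f(x,\alpha)=(\alpha+1)/x$ then gives $X\sim P(\alpha)$.

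\textbf{Expected main obstacle.} The delicate point is the last step: checking that the regularity hypotheses of Betsch's Theorem 2 are met when $f$ is only known to satisfy the moment condition $\mathbb{E}\big|X f'(X)/f(X)\big|<\infty$.
\begin{itemize}
\item Their theorem requires the support to be an interval $[L,\infty)$, which is not given in advance.
\item It also requires a condition of the type $\mathbb{E}|(\log f)'(X)\,\min(X-L,1)|<\infty$.
\end{itemize}
I would first try to deduce the support property from the fact that $f=g$ on $(1,\infty)$. The function $g(s)$ is strictly positive up to the essential supremum of $X$ and is nonincreasing, and since it equals a density there I expect this to force the support of $X$ to be $[1,M]$ for some $M\le\infty$. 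I would then rule out $M<\infty$, and obtain the required integrability by rewriting the given moment condition with $L=1$. If this approach stalls, I would fall back on an alternative argument for the converse direction:
\begin{enumerate}[(i)]
\item Differentiate $f(s)=g(s)$ on $(1,\infty)$ to get the ODE $f'(s)=-\frac{\alpha+1}{s}f(s)$.
\item Solve it to obtain $f(s)=Cs^{-\alpha-1}$.
\item Use $\int_1^\infty f=1$ to pin down $C=\alpha$.
\end{enumerate}
This route bypasses Betsch's conditions entirely.
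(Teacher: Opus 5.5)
Your argument is correct. The necessity half is the paper's: substitute \eqref{FPC} into the Laplace integral and interchange. Using Tonelli with a nonnegative integrand is cleaner than the paper's absolute-value bound. Moreover, \eqref{FPC} for $P(\alpha)$ is just the computation $\mathbb{E}\bigl(\frac{\alpha+1}{X}\mathbb{I}(X>s)\bigr)=(\alpha+1)\int_s^\infty \alpha x^{-\alpha-2}\,dx=\alpha s^{-\alpha-1}$, so the regularity checks the paper performs are not needed in this direction.

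The real difference is in the converse, which the paper dispatches with ``the Laplace transform determines the distribution''. Taken literally that is not an argument, because the right-hand side is itself a functional of the unknown law of $X$, not the Laplace transform of $P(\alpha)$. Your identification of it as $\int_0^\infty e^{-ts}g(s)\,ds$, followed by Lerch's theorem and the continuity upgrade to \eqref{FPC} on $(1,\infty)$, is what that sentence has to mean. From there the paper's framework would invoke the converse of Theorem 2 of \citet{betsch2021fixed}. Your ODE variant is fully elementary and makes the theorem independent of that reference: $g'(s)=-\frac{\alpha+1}{s}f(s)$, so $s^{\alpha+1}f(s)$ is constant on $(1,\infty)$, and normalisation forces the constant to be $\alpha$. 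The Betsch--Ebner route buys only uniformity with the general fixed-point template.

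Two corrections to your plan.

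\textbf{The anticipated obstacle does not exist.} In Theorem 2 of \citet{betsch2021fixed}, the support $[L,\infty)$ and the regularity conditions concern the \emph{target} density $f(\cdot\,;\alpha)$, which is exactly what the paper verifies in \eqref{C2}--\eqref{C4}. The only requirements on $X$ are that it be $(1,\infty)$-valued and that the Pareto score $(\alpha+1)/X$, times a factor no larger than $X$, be integrable. The first is given by your step (a), with $\mathbb{P}(X=1)=0$ since $X$ has a density. The second holds because that product is bounded by $\alpha+1$. So no analysis of the essential supremum of $X$ is needed.

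\textbf{Replacing $X$ by $\max(X,1)$ is unnecessary.} It alters the hypothesis of the converse, so you would be proving a slightly different theorem. With the literal kernel, the integrand on $\{X<1\}$ equals $-\frac{\alpha+1}{X}\int_X^1 e^{-ts}\,ds\le 0$. If its expectation is $-\infty$ the identity fails outright; otherwise Tonelli, applied to the positive and negative parts separately, shows the right-hand side is the Laplace transform of $h$. Here $h=g$ on $(1,\infty)$ and $h(s)=-\mathbb{E}\bigl(\frac{\alpha+1}{X}\mathbb{I}(X<s)\bigr)$ on $(0,1)$. Lerch then gives $0\le f=h\le 0$ a.e.\ on $(0,1)$, hence $\mathbb{P}(X<1)=0$. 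The rest of your proof then covers the theorem exactly as stated.
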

\begin{proof}
We first assume that $X$ has the Pareto distribution $P(\alpha)$, and we write $F(\cdot \, ; \alpha)$ and $f(\cdot \, ; \alpha)$ for the distribution function and the density function of $X$, respectively. \\

\noindent Furthermore, it is obvious that the density function $f$ is continuous and positive on $(1,\infty)$, and there exists a partition $L<y_1<\ldots<y_m<\infty$ such that $f$ is continuously differentiable on $(1,y_1),(y_k,y_{k+1}), k \in \{1,\ldots,m-1\}$ and $(y_m,R)$, where $S(p)=(1,\infty) \backslash \{y_1,\ldots,y_m \}$.

\noindent Moreover, we put 
$k_f(x) = \bigg| \frac{f'(x,\alpha) \, \min (F(x,\alpha),1-F(x,\alpha) )}{f^2(x,\alpha)}  \bigg|, \quad 1<x<\infty .$ 

\noindent Letting $\eta= 2^{1/\alpha}$, we have 
\begin{equation*} \min (F(x,\alpha),1-F(x,\alpha) ) = 
    \begin{cases}
        F(x,\alpha), \qquad ~~~ x \leq \eta \\ 
        1-F(x,\alpha) , \quad x>\eta 
    \end{cases} \, .
\end{equation*}

\noindent It is easy to show that $\lim _{x\rightarrow 1} k_f(x) = 0$ and $\lim _{x\rightarrow \infty} k_f(x) = \bigg| - \frac{(\alpha+1)}{\alpha} \bigg| $. The continuity of $k_f(\cdot)$ then yields
\begin{equation} \label{C2}
    \sup_{x \in (1,\infty) }k_f(x) < \infty.
\end{equation}
Furthermore, 
\begin{equation} \label{C3}
    \int_1^\infty (1+|x|) \, |f'(x,\alpha)| \, dx = 1+2\alpha < \infty
\end{equation}
and
\begin{equation} \label{C4}
    \lim_{x\rightarrow1} \frac{F(x,\alpha)}{f(x,\alpha)} = 0
\end{equation}

\noindent In view of Equation \eqref{C2}, \eqref{C3} and \eqref{C4}, we can apply Theorem 2 of \citet{betsch2021fixed}. Hence $X$ follows a $P(\alpha)$ distribution if and only if its density is given by 
\[ f_X(s)= \mathbb{E} \left( -\frac{f'(x,\alpha)}{f(x,\alpha)} \mathbb{I}(X>s) \right) = \mathbb{E} \left( \frac{\alpha+1}{X} \mathbb{I}(X>s) \right)  \]
for almost every $s>1$. Next, we apply Tonelli's theorem to conclude that
\begin{align*}
    \int_1^\infty \exp(-ts) \, \mathbb{E} \bigg( \bigg| - \frac{f'(x,\alpha)|_X}{f(X,\alpha)} \bigg| \, \mathbb{I} (X>s) \bigg) &= \int_1^\infty \exp(-ts) \, \int_1^\infty  \bigg| f'(x,\alpha)\bigg| \, \mathbb{I} (x>s) \, dx \, ds, \\
    &= \int_1^\infty \bigg( \frac{\exp(-t) - \exp(-tx)}{t} \bigg)  \bigg| f'(x,\alpha)\bigg| \, dx ,\\
    & \leq \int_1^\infty (x-1) \bigg| f'(x,\alpha)\bigg| \, dx, \\ 
    & \leq 1 
\end{align*}
holds for t>0. Since the integral of the absolute value of the function is finite, we can use the Fubini's theorem, and the Laplace transform of $X$ takes the form
\begin{align*}
 \mathscr{L}_X(t) &= \int_1^\infty \exp(-ts) \, \mathbb{E} \bigg( \frac{\alpha+1}{X} \mathbb{I} (X>s) \bigg) \, ds, \\ &=  \mathbb{E} \bigg( \frac{\alpha+1}{X} \cdot  \frac{\exp(-t)-\exp(-tX)}{t} \bigg)  
\end{align*}
for $t>0$. The converse statement also follows since the Laplace transform determines the distribution.
\end{proof}

%---------------------------------------------------------------

\section{Test statistic} \label{proposed}  
Let $X_1, X_2,\ldots, X_n$ be a sample from a continuous df $F$ defined on $[1,\infty)$. 
On the basis of above amalgamation of Stein's characterization with Laplace transform given as in Section \ref{Fusion}, we will construct three departure measures (Integral type,  Kolmogorov-Smirnov type and cramer-Von Mises type) $\Delta_1(F), \Delta_2(F) \text{ and } \Delta_3(F)$ to test the hypothesis, which are given by: 

\begin{equation}
    \Delta_1(F) = \int_0^\infty \bigg(  \mathbb{E} \bigg( \frac{ \hat \alpha+1}{X} \cdot  \frac{\exp(-t)-\exp(-tX)}{t} \bigg) - \mathscr{L}_X(t)  \bigg) \bigg) \, dt
\end{equation}

\begin{equation}
    \Delta_2(F) = \sup_{t>0} \bigg|  \mathbb{E} \bigg( \frac{\hat \alpha+1}{X} \cdot  \frac{\exp(-t)-\exp(-tX)}{t} \bigg) - \mathscr{L}_X(t)  \bigg| 
\end{equation}

\begin{equation}
    \Delta_3(F) =  \int_0^\infty \bigg(  \mathbb{E} \bigg( \frac{\hat \alpha+1}{X} \cdot  \frac{\exp(-t)-\exp(-tX)}{t} \bigg) - \mathscr{L}_X(t)  \bigg)^2 \, dt
\end{equation}
respectively, where $\hat{\alpha}$ is MLE of $\alpha$ which is given as 
\begin{equation}
    \label{alpha_hat}
    \hat{\alpha}_n :=\hat{\alpha}_n(X_1,\ldots,X_n)= \frac{n}{\sum_i^n \log(X_i)}.
\end{equation}
Note that $X$ has support on $[1,\infty)$ with df $\mathcal{F}$.
Therefore, based on the characterization given in the previous section, $\Delta_1(F)=0$, $\Delta_2(F)=0$ and $\Delta_3(F)=0$ if and only if $f$ is the Pareto distribution. However, these quantities are unknown, and a test should be constructed based on the empirical counterpart of them.

\noindent Hence, to develop the test statistics based on the above departure measures we will make use of the empirical distribution function of $ \mathbb{E} \bigg(\frac{\hat{\alpha}+1}{X} \cdot  \frac{\exp(-t)-\exp(-tX)}{t} \bigg)$ and $\mathscr{L}_X(t) $, which are given as: 
\[ \mathbb{E} \bigg(\frac{\hat{\alpha}+1}{X} \cdot  \frac{\exp(-t)-\exp(-tX)}{t} \bigg) = \frac{1}{n} \sum_{i=1}^n \frac{\hat{\alpha}+1}{X_i} \cdot \bigg( \frac{\exp(-t)-\exp(-tX_i)}{t} \bigg) \]
and
\[ \mathscr{L}_X(t)  = \frac{1}{n} \sum_{i=1}^n \exp(-tX_i) \]

\noindent Finally, we propose the following three classes of test statistics:
\begin{align} \label{TSI}
    \mathscr{DS}_1 &= \int_0^\infty \bigg (  \frac{1}{n} \sum_{i=1}^n \frac{\hat{\alpha}+1}{X_i} \cdot \bigg( \frac{\exp(-t)-\exp(-tX_i)}{t} \bigg) -\frac{1}{n} \sum_{i=1}^n \exp(-tX_i)   \bigg) \, dt , \nonumber \\
    &= \frac{\hat{\alpha}+1}{n} \sum_{i=1}^n \frac{\log(X_i)}{X_i} - \frac{1}{n} \sum_{i=1}^n \frac{1}{X_i}.
\end{align}

\begin{align*}
    \mathscr{DS}_2 &= \sup_{t>0} \bigg|  \frac{1}{n} \sum_{i=1}^n \frac{\hat{\alpha}+1}{X_i} \cdot \bigg( \frac{\exp(-t)-\exp(-tX_i)}{t} \bigg) -\frac{1}{n} \sum_{i=1}^n \exp(-tX_i) \bigg| 
\end{align*}

\noindent Note that using the transformation $\exp(-t) =s$ it is easily concluded that $\mathscr{DS}_2$ reduces to:
\begin{align}
    \mathscr{DS}_2 &= \sup_{s \in [0,1]} \bigg| \frac{1}{n} \sum_{i=1}^n \frac{\hat{\alpha}+1}{X_i} \bigg( \frac{s-s^{X_i}}{-\log(s)}\bigg) -\frac{1}{n} \sum_{i=1}^n s^{X_i}   \bigg|.
\end{align}
and,
\begin{align}
    \mathscr{DS}_3 =& \int_0^\infty \bigg (  \frac{1}{n} \sum_{i=1}^n \frac{\hat{\alpha}+1}{X_i} \cdot \bigg( \frac{\exp(-t)-\exp(-tX_i)}{t} \bigg) -\frac{1}{n} \sum_{i=1}^n \exp(-tX_i)   \bigg)^2 \, dt , \nonumber \\
    =& \frac{(\hat \alpha+1)^2}{n^2} \sum_{i,j=1}^n \frac{1}{X_iX_j} \bigg( X_i \log(X_i+X_j) + X_j \log(X_i+X_j)- X_i \log(1+X_i) \nonumber \\ &- X_j\log(1+X_j) - \log\bigg(\frac{(1+X_i)(1+X_j)}{4}\bigg) \bigg) - \frac{\hat \alpha+1}{n^2} \sum_{i,j=1}^n \frac{1}{X_i} \log\bigg( \frac{X_i+X_j}{1+X_j} \bigg) \nonumber \\& - \frac{\hat{\alpha}+1}{n^2} \sum_{i,j=1}^n \frac{1}{X_j} \log\bigg( \frac{X_i+X_j}{1+X_i} \bigg) + \frac{1}{n^2} \sum_{i,j=1}^n \frac{1}{X_i+X_j}\, .
\end{align}
\section{Asymptotic Theory} \label{limit} In this section we study the asymptotic properties of the proposed test under the null hypothesis. It is well known that under $\mathscr{H}_0$, the MLE of shape parameter is given as in Eq.~\eqref{alpha_hat}. \\
From this under $\mathscr{H}_0$, one can deduce the following equality:

\begin{align*}
    \sqrt{n} (\hat{\alpha}-\alpha) =   \frac{1}{\sqrt{n}} \sum_{i=1}^n I_\alpha^{-1} \,  \dot{l} (X_i \, ;\alpha) + o_P(1),
\end{align*}
where,
$I_\alpha^{-1}$ is the Fisher information and $\dot{l} (X_i \, ;\alpha)$ is the score function which are given as
\begin{align*}
I_\alpha &= - \frac{\partial^2}{\partial \alpha^2} \log f(x), \, \text{and}   \\ 
\dot{l}(X_i\, ; \alpha )&= \frac{\partial}{\partial \alpha} \log f(x),
\end{align*}
respectively. Hence we have the linear representation,
\begin{equation} \label{LRA}
 \sqrt{n} (\hat{\alpha}-\alpha) = \frac{-\alpha^2}{\sqrt{n}} \sum_{i=1}^n  \, \bigg( \log X_i - \frac{1}{\alpha} \bigg) + o_P(1)   
\end{equation}

\subsection{Asymptotic property of the test statistics \texorpdfstring{$\mathscr{DS}_1$}{DS1} under \texorpdfstring{$\mathscr{H}_0$ }{H0}}\label{limit1} 
\begin{thm} Let $X_1,\ldots,X_n$ be iid. $P(\alpha)$ for $\alpha>0$. Then we have
\begin{equation*}
    \sqrt{n}\mathscr{DS}_1\stackrel{\mathcal{D}}{\rightarrow} N(0,\sigma^2),
\end{equation*}
where $\sigma^2=\frac{\alpha [2+(2 \alpha + \alpha^2 ) (5+8\alpha+4\alpha^2)]}{(2+\alpha)^3 (1+\alpha)^4 }$.
\end{thm}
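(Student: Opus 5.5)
Under $\mathscr{H}_0$ we have $\mathscr{DS}_1=(\hat\alpha+1)A_n-B_n$, where $A_n=\frac1n\sum_{i}\log(X_i)/X_i$ and $B_n=\frac1n\sum_i 1/X_i$. The plan is to write $\sqrt n\,\mathscr{DS}_1$ as a normalized sum of iid centred terms plus $o_P(1)$, and then apply the classical central limit theorem. All moments will come from one device. If $X\sim P(\alpha)$, then $Y=\log X\sim\mathrm{Exp}(\alpha)$, so for $k\ge0$ and $m\in\{0,1,2\}$
\[
\mathbb{E}\bigl(X^{-k}(\log X)^m\bigr)=\mathbb{E}\bigl(e^{-kY}Y^m\bigr)=\frac{\alpha\, m!}{(\alpha+k)^{m+1}} .
\]
In particular $b:=\mathbb{E}(1/X)=\alpha/(\alpha+1)$ and $a:=\mathbb{E}(\log X/X)=\alpha/(\alpha+1)^2$. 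Hence $(\alpha+1)a-b=0$, so the population counterpart of the statistic vanishes and no centring constant appears.

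Next I linearize. I write
\[
\sqrt n\,\mathscr{DS}_1=(\alpha+1)\sqrt n(A_n-a)+\sqrt n(\hat\alpha-\alpha)\,A_n-\sqrt n(B_n-b).
\]
The middle term uses $(\alpha+1)a-b=0$. By the weak law of large numbers $A_n\stackrel{P}{\longrightarrow}a$. Also $\sqrt n(\hat\alpha-\alpha)=O_P(1)$ by \eqref{LRA}. So Slutsky's lemma lets me replace $A_n$ by $a$ at cost $o_P(1)$. Inserting the linear representation \eqref{LRA} then gives
\[
\sqrt n\,\mathscr{DS}_1=\frac1{\sqrt n}\sum_{i=1}^n h(X_i)+o_P(1),
\]
where
\[
h(x)=(\alpha+1)\Bigl(\frac{\log x}{x}-a\Bigr)-\Bigl(\frac1x-b\Bigr)-a\alpha^2\Bigl(\log x-\frac1\alpha\Bigr).
\]
Each summand has mean zero. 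Its second moment is finite because every moment $\mathbb{E}(X^{-k}(\log X)^m)$ with $k\ge0$ is finite. The Lindeberg--L\'evy CLT combined with Slutsky then yields $\sqrt n\,\mathscr{DS}_1\stackrel{d}{\longrightarrow}N(0,\mathbb{E}\,h(X)^2)$.

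It remains to show that $\sigma^2=\mathbb{E}\,h(X)^2$ equals the stated expression. I will expand the square into the three variances and three cross covariances of $\log X/X$, $1/X$ and $\log X$. Each needed mixed moment is read off from the displayed formula:
\[
\mathbb{E}\frac{(\log X)^2}{X^2},\quad \mathbb{E}\frac{\log X}{X^2},\quad \mathbb{E}\frac1{X^2},\quad \mathbb{E}\frac{(\log X)^2}{X},\quad \mathbb{E}\log X=\frac1\alpha,\quad \mathbb{V}\text{ar}(\log X)=\frac1{\alpha^2}.
\]
The main obstacle is purely algebraic: bringing the sum over the common denominator $(\alpha+1)^4(\alpha+2)^3$ and recognising the numerator as $\alpha[2+(2\alpha+\alpha^2)(5+8\alpha+4\alpha^2)]$.

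To keep this manageable, I will first simplify $h$ in terms of $Y=\log X$, namely $(\alpha+1)Ye^{-Y}-e^{-Y}-a\alpha^2Y+\text{const}$. I will then compute $\mathbb{V}\text{ar}$ of this combination directly via the moments of $Y$. As a sanity check I will compare against a numerical value at, e.g., $\alpha=1$, where the claimed variance is $17/432$.
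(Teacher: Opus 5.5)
Your proposal is correct and is essentially the paper's own proof. It uses the same decomposition based on $(1+\alpha)\mu_Y=\mu_Z$, the same Slutsky replacement of $\overline{Y}_n$ by $\mu_Y$ together with the linear representation \eqref{LRA}, and the same influence function (your $h$ is the paper's $\xi_i$, since $a\alpha^2=\alpha^3/(1+\alpha)^2$), followed by the same variance--covariance bookkeeping, which your formula $\mathbb{E}(e^{-kY}Y^m)=\alpha\,m!/(\alpha+k)^{m+1}$ reproduces.

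There is one slip in your sanity check. At $\alpha=1$ the stated formula gives $\sigma^2=\frac{1\cdot(2+3\cdot 17)}{27\cdot 16}=\frac{53}{432}$, not $\frac{17}{432}$, and computing $\mathbb{V}\text{ar}(h(X))$ directly at $\alpha=1$ also gives $\frac{53}{432}$, so the check confirms the formula once that value is corrected.
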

\begin{proof}
Write
\begin{align*}
\overline{Y}_n:&=\frac{1}{n}\sum_{i=1}^n \frac{\log X_i}{X_i},
\qquad
\mu_Y:=\mathbb E\bigg[\frac{\log X_1}{X_1}\bigg]=\frac{\alpha}{(1+\alpha)^2},    \\
\overline{Z}_n:&=\frac{1}{n}\sum_{i=1}^n \frac{1}{X_i},
\qquad
\mu_Z:=\mathbb E\bigg[\frac{1}{X_1}\bigg]=\frac{\alpha}{1+\alpha}.
\end{align*}
\noindent Since 
\[(1+\alpha) \frac{\alpha}{(1+\alpha)^2} - \frac{\alpha}{1+\alpha} = 0,\]
we obtain
\[
\mathscr{DS}_1=(\widehat{\alpha}-\alpha)\overline{Y}_n+(1+\alpha)(\overline{Y}_n-\mu_Y)-(\overline{Z}_n-\mu_Z).
\]
Hence
\begin{equation}\label{eq:DS1compact}
\sqrt n\,\mathscr{DS}_1
=
\overline{Y}_n\sqrt n(\widehat{\alpha}-\alpha)
+(1+\alpha)\sqrt n(\overline{Y}_n-\mu_Y)- \sqrt n(\overline{Z}_n-\mu_Z) .
\end{equation}
Using the Bahadur representation, and $\overline{Y}_n\to\mu_Y$ almost surely and $\overline{Z}_n\to\mu_Z$ almost surely, \eqref{eq:DS1compact} yields
\[
\sqrt n\,\mathscr{DS}_1
=
\frac1{\sqrt n}\sum_{i=1}^n \xi_i+o_{\mathbb P}(1),
\]
where
\[
\xi_i=
-\frac{\alpha^3}{(1+\alpha)^2}\left(\log X_i-\frac1{\alpha}\right)
+(1+\alpha)\left(\frac{\log X_i}{X_i}-\frac{\alpha}{(1+\alpha)^2}\right)-\bigg( \frac{1}{X_i} - \frac{\alpha}{1+\alpha} \bigg),
\]
are centred random variables. Now $\mathbb V(\log X_1)=1/\alpha^2$, $\mathbb V( \frac{\log X_1}{X_1})= \frac{2\alpha}{(2+\alpha)^3} - \frac{\alpha^2}{(1+\alpha)^4}$, and $\mathbb V(1/ X_1)= \frac{\alpha}{(2+\alpha) (1+\alpha)^2}$ . Further,
\begin{align*}
\mathrm{Cov}\bigg(\log X_1,\frac{\log X_1}{X_1}\bigg)
&=
\frac{\alpha-1}{(1+\alpha)^3}, \qquad     \mathrm{Cov}\bigg(\log X_1,\frac{1}{X_1}\bigg)
=
-\frac{1}{(1+\alpha)^2}, \text{ and} \\
\mathrm{Cov}\bigg(\frac{\log X_1}{X_1},\frac{1}{X_1}\bigg)
&=
\frac{\alpha}{(2+\alpha)^2} - \frac{\alpha^2}{(1+\alpha)^3}.
\end{align*}

Therefore, $\sigma^2=\mathbb V(\xi_1)=\frac{\alpha [2+(2 \alpha + \alpha^2 ) (5+8\alpha+4\alpha^2)]}{(2+\alpha)^3 (1+\alpha)^4 }$. Since the $\xi_i$ are iid.\ with mean $0$ and finite variance, the classical CLT and Slutsky's yield the claim.
\end{proof}

As a direct consequence, the following corollary provides a parameter free limit distribution.
\begin{cor}\label{cor:Tn}
    Let $\widehat{\alpha}$ be a consistent estimator of $\alpha$. Then under the standing assumptions we have
    \begin{equation*}
       (1+\widehat{\alpha})^2\sqrt{n\frac{(2+\widehat{\alpha})^3}{\widehat{\alpha} (2+(2\widehat{\alpha}+\widehat{\alpha}^2) (5+8\widehat{\alpha}+4\widehat{\alpha}^2))}}\, \mathscr{DS}_1\stackrel{\mathcal{D}}{\rightarrow} N(0,1)
    \end{equation*}
    as $n\rightarrow\infty$.
\end{cor}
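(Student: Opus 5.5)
The plan is to combine the preceding theorem with the continuous mapping theorem and Slutsky's lemma. Write $\sigma^2(\alpha)=\frac{\alpha [2+(2 \alpha + \alpha^2 ) (5+8\alpha+4\alpha^2)]}{(2+\alpha)^3 (1+\alpha)^4 }$, viewed as a function of $\alpha\in(0,\infty)$. Observe that the normalizing factor in the corollary equals exactly $1/\sigma(\widehat{\alpha})$, where $\sigma(a)=\sqrt{\sigma^2(a)}$. Indeed,
\begin{equation*}
(1+a)^2\sqrt{\frac{(2+a)^3}{a\,(2+(2a+a^2)(5+8a+4a^2))}}=\frac{1}{\sqrt{\sigma^2(a)}} .
\end{equation*}
Hence the statistic in the corollary is $\sqrt{n}\,\mathscr{DS}_1/\sigma(\widehat{\alpha})$.

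The steps are as follows. First, I check that $\sigma^2(\cdot)$ is continuous and strictly positive on $(0,\infty)$. This is immediate, since for $a>0$ the numerator $a[2+(2a+a^2)(5+8a+4a^2)]$ is a product of positive polynomials and the denominator $(2+a)^3(1+a)^4$ is positive. Consequently $a\mapsto 1/\sigma(a)$ is continuous on $(0,\infty)$. Second, since $\widehat{\alpha}\stackrel{P}{\longrightarrow}\alpha$ with $\alpha>0$, the continuous mapping theorem gives $1/\sigma(\widehat{\alpha})\stackrel{P}{\longrightarrow}1/\sigma(\alpha)$. A minor technicality is the event $\{\widehat{\alpha}\le 0\}$, on which the factor may be undefined. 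Its probability tends to zero by consistency, so I define the statistic arbitrarily on it without affecting the limit. For the MLE in \eqref{alpha_hat} this event is in fact empty, because $X_i>1$ almost surely.

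Third, the preceding theorem gives $\sqrt{n}\,\mathscr{DS}_1\stackrel{\mathcal{D}}{\rightarrow}N(0,\sigma^2(\alpha))$. Slutsky's lemma applied to the product $\big(1/\sigma(\widehat{\alpha})\big)\cdot\sqrt{n}\,\mathscr{DS}_1$ then yields convergence to $\frac{1}{\sigma(\alpha)}N(0,\sigma^2(\alpha))=N(0,1)$.

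No step is a genuine obstacle. The only point needing care is the algebraic verification that the displayed factor is exactly $\sigma(\widehat{\alpha})^{-1}$, together with the positivity of $\sigma^2$, which guarantees that the limit $1/\sigma(\alpha)$ is finite.
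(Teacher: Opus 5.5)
Your proposal is correct and matches what the paper intends by calling the corollary a direct consequence of the preceding theorem. You rewrite the normalizing factor as $1/\sigma(\widehat{\alpha})$, use continuity and positivity of $\sigma^2(\cdot)$ on $(0,\infty)$ together with $\widehat{\alpha}\stackrel{P}{\longrightarrow}\alpha$, and conclude by Slutsky's lemma; your remark on the event $\{\widehat{\alpha}\le 0\}$, which is empty for the MLE since $X_i>1$ a.s., adds care the paper leaves implicit.
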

So to perform the test, no bootstrap is needed.

\subsection{Asymptotic property of the test statistics \texorpdfstring{$\mathscr{DS}_2$}{DS2} under \texorpdfstring{$\mathscr{H}_0$ }{H0}}\label{limit2} 
\begin{thm}
    Let $X_1, \ldots, X_n$ be iid rvs following the Pareto distribution with shape parameter $\alpha$. Then the asymptotic distribution of $\sqrt n \mathscr{DS}_2$ is given by
    \[ \sqrt n \mathscr{DS}_2 \overset{d}{\rightarrow} \sup_{0<s<1} |\mathbb{G}(s) - Z_\alpha \cdot M(s)| , \]
    where $\mathbb{G}(s)$ is zero-mean Gaussian process with covariance function 
\[ \mathbb{C}\text{ov} (\mathbb{G}(s), \mathbb{G}(t)) = \mathbb{C}\text{ov} \bigg( \bigg( \frac{\alpha+1}{X} \cdot \frac{s-s^X}{-\log (s)} - s^X \bigg), \bigg( \frac{\alpha+1}{X} \cdot \frac{t-t^X}{-\log (t)} - t^X \bigg) \bigg),   \]
$Z_\alpha \sim N(0, \alpha^2)$ is the limiting distribution of $\sqrt{n} (\hat{\alpha} - \alpha)$, and 
\[M(s) = \mathbb{E} \bigg( \frac{\partial \phi}{\partial \alpha} (X,s;\alpha)   \bigg), \]
with \[\phi (x, s; {\alpha}) = \frac{\alpha+1}{x} \cdot  \frac{s-s^x}{-\log(s)}  .\]

\end{thm}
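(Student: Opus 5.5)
We treat $\sqrt n\,\mathscr{DS}_2$ as the supremum of an empirical process indexed by $s\in(0,1)$ and apply the continuous mapping theorem. Write
\[
h(x,s;\alpha)=\phi(x,s;\alpha)-s^x,\qquad
\mathbb{G}_n(s;\alpha)=\frac{1}{\sqrt n}\sum_{i=1}^n h(X_i,s;\alpha).
\]
By the characterization theorem of Section \ref{Fusion}, with $s=e^{-t}$, we have $\mathbb{E}\,h(X,s;\alpha)=0$ for every $s$ under $\mathscr H_0$. Since $\phi$ is affine in $\alpha$, the statistic decomposes exactly:
\[
\sqrt n\,\mathscr{DS}_2=\sup_{0<s<1}\Bigl|\mathbb{G}_n(s;\alpha)+\sqrt n(\hat\alpha-\alpha)\,M_n(s)\Bigr|,
\]
where $M_n(s)=\frac1n\sum_{i=1}^n X_i^{-1}\frac{s-s^{X_i}}{-\log s}$ and $\partial\phi/\partial\alpha=x^{-1}(s-s^x)/(-\log s)$. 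There is no remainder term, so no Taylor expansion is needed.

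The plan has three steps.

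\emph{Step 1: weak convergence of $\mathbb{G}_n$.} We show $\mathbb{G}_n\to\mathbb{G}$ in $\ell^\infty((0,1))$. It is convenient to extend to the compact set $[0,1]$ by continuity. As $s\to0$ we have $h\to0$. As $s\to1$, $(s-s^x)/(-\log s)\to x-1$, so $h\to(\alpha+1)(1-1/x)-1$. The class $\{h(\cdot,s):s\in[0,1]\}$ has envelope $(\alpha+1)(x-1)/x+1\le\alpha+2$, which is bounded. The map $s\mapsto h(x,s)$ is Lipschitz in $s$ with a constant that is square integrable in $x$; this needs checking near $s=0$ and $s=1$. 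The class is therefore a Lipschitz-in-parameter class with finite bracketing entropy (van der Vaart, Thm.~2.7.11), hence Donsker. Finite-dimensional convergence follows from the multivariate CLT, and this gives the stated covariance.

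\emph{Step 2: the estimator and $M_n$.} By the linear representation \eqref{LRA}, $\sqrt n(\hat\alpha-\alpha)=-\alpha^2 n^{-1/2}\sum_i(\log X_i-1/\alpha)+o_P(1)$, which converges to $Z_\alpha\sim N(0,\alpha^2)$. We then need $\sup_s|M_n(s)-M(s)|\to0$ almost surely. This follows from a Glivenko–Cantelli argument for the same bounded, Lipschitz-in-$s$ class, since $0\le x^{-1}(s-s^x)/(-\log s)\le 1$.

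\emph{Step 3: joint convergence and conclusion.} The pair $(\mathbb{G}_n,\sqrt n(\hat\alpha-\alpha))$ converges jointly to $(\mathbb{G},Z_\alpha)$: the second component is asymptotically an iid sum, and joint finite-dimensional convergence plus tightness of each marginal gives joint weak convergence in $\ell^\infty\times\mathbb R$. The correlation between $\mathbb{G}$ and $Z_\alpha$ is implicit in the statement. Then $\mathbb{G}_n+\sqrt n(\hat\alpha-\alpha)M_n\to\mathbb{G}+Z_\alpha M$ by Slutsky in $\ell^\infty$. Since the sup-norm is continuous, the continuous mapping theorem yields the claimed limit.

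The sign convention differs only by the symmetry $Z_\alpha\stackrel{d}{=}-Z_\alpha$ marginally. Jointly, however, the limit should be read as $\mathbb{G}+Z_\alpha M$ with $Z_\alpha$ the limit of $\sqrt n(\hat\alpha-\alpha)$. Equivalently, $-Z_\alpha$ can be taken as the limit of $\sqrt n(\alpha-\hat\alpha)$.

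\emph{Main obstacle.} The hardest step is the tightness and Donsker property in Step 1 near the endpoints $s\to0$ and $s\to1$, where $-\log s$ degenerates. There I would first reparametrize back to $t=-\log s\in(0,\infty)$ and bound $|\partial_t h(x,t)|$ by $C(1+x^{-1})\min(1,x/t)$-type expressions. This should give a square-integrable Lipschitz bound. Failing that, I would verify asymptotic equicontinuity directly through a bound on $\mathbb{E}\sup_{|s-s'|<\delta}|h(X,s)-h(X,s')|^2$.
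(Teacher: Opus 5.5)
\noindent Your plan follows the same route as the paper's proof. The paper also splits $\sqrt n\,DS_2(s)$ into $\mathbb{G}_n(s)$ plus $\sqrt n(\hat\alpha-\alpha)$ times the averaged $\partial\phi/\partial\alpha$. It cites an empirical-process CLT (van der Vaart, Thm.~19.28) without checking its conditions, replaces $M_n$ by $M$, and takes the supremum.

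You are more careful on three points the paper glosses over. First, your decomposition is exact because $\phi$ is affine in $\alpha$; the paper's Taylor remainder is superfluous, and its $o_p(1)$ would not survive multiplication by $\sqrt n$ anyway. Second, you ask for $\sup_s|M_n(s)-M(s)|\to0$. Third, you prove joint convergence of $(\mathbb{G}_n,\sqrt n(\hat\alpha-\alpha))$.

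Your sign remark is substantive. The paper's own proof ends with $\sup_s|\mathbb{G}(s)+Z_\alpha M(s)|$, contradicting its statement. As $s\to1$, $h(x,s)\to\alpha-(\alpha+1)/x$, and $\mathbb{E}[(\alpha-(\alpha+1)/X)\log X]=1/(\alpha+1)\neq0$. So $\mathbb{G}$ is correlated with $Z_\alpha$, and $\mathbb{G}+Z_\alpha M$ and $\mathbb{G}-Z_\alpha M$ do not have the same law.

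\noindent The gap is in Step~1, and the Lipschitz-in-parameter claim is false for $\alpha\le2$. Write $\frac{s-s^x}{-\log s}=\int_1^x s^u\,du$. Then $\partial_s s^x=xs^{x-1}$ and $\partial_s\phi(x,s;\alpha)=\frac{\alpha+1}{x}\int_1^x us^{u-1}\,du$, which at $s=1$ equal $x$ and $(\alpha+1)(x^2-1)/(2x)$. Hence the Lipschitz constant of $s\mapsto h(x,s)$ on $(0,1)$ grows linearly in $x$, while $\mathbb{E}X^2=\infty$ when $\alpha\le2$.

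So van der Vaart--Wellner Thm.~2.7.11 with the Euclidean metric on $s$ covers only $\alpha>2$. Likewise, your Lipschitz-based Glivenko--Cantelli step for $M_n$ covers only $\alpha>1$. This excludes $\alpha=1,2$, which are used in the simulations.

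Your proposed reparametrization $t=-\log s$ does not help. Since $|\partial_t e^{-tx}|=xe^{-tx}$ equals $1/(et)$ at $x=1/t$, the bound $C(1+x^{-1})\min(1,x/t)$ fails as $t\to0$; the difficulty sits at the endpoint $s=1$ in any parametrization. Your last fallback, an $L^2$ bound on local oscillations, gives asymptotic equicontinuity only once it is combined with an entropy or bracketing count.

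\noindent The repair is to use monotonicity instead of smoothness. For every $x\ge1$, both $s\mapsto s^x$ and $s\mapsto\phi(x,s;\alpha)=\frac{\alpha+1}{x}\int_1^x s^u\,du$ are nondecreasing on $[0,1]$. They are bounded by $1$ and $\alpha+1$ respectively, and their means are continuous in $s$. Exactly as for the empirical distribution function, choose a grid $0=s_0<\cdots<s_K=1$ on which each mean increases by at most $\epsilon^2$. This gives brackets $[f_{s_j},f_{s_{j+1}}]$ of $L^2(P)$-size $O(\epsilon)$ with $K=O(\epsilon^{-2})$.

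Each family is therefore Donsker, and so is $\{h(\cdot,s)\}$, for every $\alpha>0$. The same brackets give the uniform law of large numbers for $M_n$, since $x^{-1}\int_1^x s^u\,du$ is also monotone in $s$.

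Alternatively, $|s^x-s'^x|\le x^\gamma|s-s'|^\gamma$ holds, with an analogous bound $(\alpha+1)x^\gamma|s-s'|^\gamma$ for $\phi$. This lets you run Thm.~2.7.11 in the metric $|s-s'|^\gamma$ with $\gamma<\min(1,\alpha/2)$. With either replacement, your Steps~2 and~3 go through as written.
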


\begin{proof}
    Consider the statistic
    \begin{equation} \tag{*} \label{DS2}
     DS_2 = \frac{1}{n} \sum_{i=1}^n \phi (X_i, s; \hat{\alpha}) - \frac{1}{n} \sum_{i=1}^n s^{X_i},    
    \end{equation}
     and define  \[\mathscr{DS}_2 = \sqrt{n} \sup_{0<s<1} |DS_2| \]
Using a first order series expansion around $\alpha$, we have 
\[ \phi(X_i,s;\hat{\alpha}) =\phi(X_i,s;{\alpha}) + (\hat{\alpha}-\alpha) \frac{\partial \phi}{\partial \alpha} (X_i,s;\alpha) + o_p(1). \]

\noindent Substituting into Equation \eqref{DS2}, we get 
\begin{align*}
    DS_2 = \frac{1}{n} \sum_{i=1}^n \bigg( \phi(X_i,s;\alpha)-s^{X_i} \bigg) + (\hat{\alpha}-\alpha) \frac{1}{n} \sum_{i=1}^n \frac{\partial \phi}{\partial \alpha} (X_i,s;\alpha) + o_p(1),
\end{align*}

\noindent which implies,
 \begin{align*}
   \sqrt{n} DS_2 = \sqrt{n} \bigg( \frac{1}{n} \sum_{i=1}^n \bigg( \phi(X_i,s;\alpha)-s^{X_i} \bigg) \bigg)  + \sqrt{n} (\hat{\alpha}-\alpha) \frac{1}{n} \sum_{i=1}^n \frac{\partial \phi}{\partial \alpha} (X_i,s;\alpha) + o_P(1).
\end{align*}
Now define, 
\[ \mathbb{G}_n(s) = \sqrt{n} \bigg( \frac{1}{n} \sum_{i=1}^n N_s(X_i) - \mathbb{E} (N_s(X)) \bigg) ~  \text{with} ~  N_s(X) = \phi(X,s;\alpha)-s^X. \]   
Here $\mathbb{G}_n(s)$ is stochastic process indexed by $s$, and by central limit theorem it converges weakly to a zero-mean Gaussian process $\mathbb{G}
(s)$ ( see \citet{Vaart_1998}, theorem 19.28), with covariance function
\begin{align*}
   \mathbb{C}\text{ov} (\mathbb{G}(s), \mathbb{G}(t)) &= \mathbb{C}\text{ov} (N_s(X), N_t(X)), \\
   &= \mathbb{C}\text{ov} \bigg( \bigg( \frac{\alpha+1}{X} \cdot \frac{s-s^X}{-\log (s)} - s^X \bigg), \bigg( \frac{\alpha+1}{X} \cdot \frac{t-t^X}{-\log (t)} - t^X \bigg) \bigg). 
\end{align*}
From standard asymptotics, we know that
\[ \sqrt{n} (\hat{\alpha}-\alpha) \overset{d}{\rightarrow} N(0,\alpha^2).  \]
So, second term becomes $Z_\alpha \cdot M(s)$, where, 
\begin{align*}
    M(s) &= \frac{1}{n} \sum_{i=1}^n \frac{\partial \phi}{\partial \alpha} (X_i,s;\alpha), \\
    &= \mathbb{E} \bigg( \frac{\partial \phi}{\partial \alpha} (X,s;\alpha) \bigg), \\
    &= \frac{\alpha \, ( (\alpha+1) \, E_{\alpha+2} ( -\log(s)) -s  )}{(\alpha+1) \log(s)}.
\end{align*}
Where, \[E_n(z)=\int_1^{\infty }\frac{e^{-zt}}{t^n}dt .\]

\noindent Hence, 
 \[ \sqrt n \mathscr{DS}_2 \overset{d}{\rightarrow} \sup_{0<s<1} |\mathbb{G}(s) + Z_\alpha \cdot M(s)|  \]
\end{proof}

\subsection{Asymptotic property of the test statistics \texorpdfstring{$\mathscr{DS}_3$}{DS3} under \texorpdfstring{$\mathscr{H}_0$ }{H0} } \label{limit3} 
Due to the $L^2$- structure of the test statistic, a convenient setting the separable Hilbert space $\mathbb{H}$= $L^2 ([1,\inf),\mathscr{B},dt)$ of measurable functions $f: [1,\infty) \rightarrow \mathcal{R}$ satisfying $\int_1^\infty |f(t)|^2 \, dt < \infty$. Here, $\mathscr{B}$ stands for the Borel sigma-field on $[1,\infty)$. The scalar product and the norm in $\mathbb{H}$ will be denoted by
\[ \left< f,g \right>_\mathbb{H} = \int_1^\infty f(t) g(t) dt, \quad ||f||_\mathbb{H} =\left< f,f \right>_\mathbb{H}^{1/2}, \quad f,g \in \mathbb{H}, \]
respectively. Denote
\begin{align*}
Z_n(t) =& \frac{1}{\sqrt{n}} \sum_{i=1}^n \bigg( \frac{\hat{\alpha}+1}{X_i} \cdot \frac{\exp(-t) - \exp (-tX_i)}{t}  \bigg) - \frac{1}{\sqrt{n}} \sum_{i=1}^n \exp(-tX_i) ,
\end{align*}
so obviously $n \mathscr{DS}_3 = \int_1^\infty Z_n^2(t) \, dt$. After some calculation we have
\begin{align*}
Z_n(t)=& \frac{1}{\sqrt{n}} (\hat{\alpha}-\alpha) \sum_{i=1}^n \frac{1}{X_i} \bigg( \frac{\exp(-t) - \exp (-tX_i)}{t} \bigg) + \frac{\alpha+1}{\sqrt{n}} \sum_{i=1}^n \frac{1}{X_i} \bigg( \frac{\exp(-t) - \exp (-tX_i)}{t} \bigg)  \\ &- \frac{1}{\sqrt{n}} \sum_{i=1}^n \exp(-t X_i), 
\end{align*}
and in view of Eq.~\eqref{LRA}
\begin{align*}
\hat{Z}_n(t) =& \frac{1}{\sqrt{n}} \cdot \frac{\alpha^2}{n} \sum_{j=1}^n \bigg( \log X_j - \frac{1}{\alpha} \bigg) \cdot \sum_{i=1}^n \frac{1}{X_i} \bigg( \frac{\exp(-t) - \exp (-tX_i)}{t} \bigg) \\ &+ \frac{\alpha+1}{\sqrt{n}} \sum_{i=1}^n \frac{1}{X_i} \bigg( \frac{\exp(-t) - \exp (-tX_i)}{t} \bigg)  - \frac{1}{\sqrt{n}} \sum_{i=1}^n \exp(-t X_i), \\
=& \frac{\alpha^2}{\sqrt{n}} \psi_\alpha(t)  \sum_{j=1}^n \bigg( \log X_j - \frac{1}{\alpha} \bigg) + \frac{\alpha+1}{\sqrt{n}} \sum_{i=1}^n \frac{1}{X_i} \bigg( \frac{\exp(-t) - \exp (-tX_i)}{t} \bigg)  - \frac{1}{\sqrt{n}} \sum_{i=1}^n \exp(-t X_i),
\end{align*}
where, $\psi_\alpha(t) = \mathbb{E} \bigg( \frac{1}{X_i} \cdot \frac{\exp(-t) - \exp(-tX_i)}{t} \bigg) = \frac{\alpha \exp(-t) \, Ei(2+\alpha\, ,\, t) }{t}$.
\begin{thm}
    Under the standing assumptions, there exists a centered Gaussian process $Z$ in $\mathbb{H}$ with covariance kernel $C_\alpha(s,t) = \mathbb{E}\bigg(\widehat{Z}_1(s) \widehat{Z}_1(t)\bigg), \, s,t>1,$ such that 
    \[ \mathscr{DS}_3 \overset{d}{\rightarrow} ||Z||^2_\mathbb{H}, \quad \text{as} \quad n \rightarrow \infty\]
\end{thm}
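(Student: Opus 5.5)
Following the standard Hilbert-space route (as in \citet{ebner2023goodness}), the plan is to show that $Z_n$ converges in distribution in $\mathbb{H}$ to a centred Gaussian element $Z$. The continuous mapping theorem applied to $f\mapsto\|f\|_\mathbb{H}^2$ then gives $n\mathscr{DS}_3=\|Z_n\|_\mathbb{H}^2\overset{d}{\rightarrow}\|Z\|_\mathbb{H}^2$. (The statement should be read with the scaling $n\mathscr{DS}_3$, consistent with the identity $n\mathscr{DS}_3=\int Z_n^2$ above.) The argument has two parts: (i) $\|Z_n-\widehat Z_n\|_\mathbb{H}=o_P(1)$, and (ii) $\widehat Z_n=n^{-1/2}\sum_{i=1}^n \widehat Z_i$ is a normalized sum of iid centred $\mathbb{H}$-valued random elements with $\mathbb{E}\|\widehat Z_1\|^2_\mathbb{H}<\infty$. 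Here
\[
\widehat Z_i(t)=\alpha^2\psi_\alpha(t)\Big(\log X_i-\tfrac1\alpha\Big)+\frac{\alpha+1}{X_i}\cdot\frac{e^{-t}-e^{-tX_i}}{t}-e^{-tX_i}.
\]
The sign of the first term should agree with the linear representation \eqref{LRA}; the sign does not affect the argument.

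For (ii), each summand is centred. The first term has mean zero because $\mathbb{E}\log X=1/\alpha$. The remaining two terms have mean zero by the characterization Theorem~1, which gives $\mathbb{E}\big(\frac{\alpha+1}{X}\frac{e^{-t}-e^{-tX}}{t}\big)=\mathscr{L}_X(t)$ under $\mathscr{H}_0$. For square integrability, use the elementary bound $0\le (e^{-t}-e^{-tx})/t\le \min\{x-1,\,e^{-t}/t\}$ for $x\ge1$, and note $e^{-tx}\le e^{-t}$ on $[1,\infty)$. This gives $|\widehat Z_1(t)|\le C(1+|\log X_1|)e^{-t}$ uniformly, so $\mathbb{E}\|\widehat Z_1\|_\mathbb{H}^2\le C'\,\mathbb{E}(1+\log X_1)^2<\infty$, since $\log X_1$ is exponential with rate $\alpha$. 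The central limit theorem in separable Hilbert spaces then yields $\widehat Z_n\overset{d}{\rightarrow}Z$, where $Z$ is centred Gaussian with covariance kernel $C_\alpha(s,t)=\mathbb{E}(\widehat Z_1(s)\widehat Z_1(t))$.

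For (i), write $Z_n-\widehat Z_n$ as
\[
\sqrt n(\hat\alpha-\alpha)\big(\Psi_n-\psi_\alpha\big)+\Big(\sqrt n(\hat\alpha-\alpha)-\alpha^2 n^{-1/2}\textstyle\sum_j(\log X_j-\tfrac1\alpha)\Big)\psi_\alpha,
\]
where $\Psi_n(t)=n^{-1}\sum_i X_i^{-1}(e^{-t}-e^{-tX_i})/t$. The second term is $o_P(1)\cdot\|\psi_\alpha\|_\mathbb{H}$ by \eqref{LRA}, and $\psi_\alpha\in\mathbb{H}$ by the same exponential bound. For the first term, $\sqrt n(\hat\alpha-\alpha)=O_P(1)$, so it suffices that $\|\Psi_n-\psi_\alpha\|_\mathbb{H}\to0$ in probability. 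This follows from the strong law of large numbers in $\mathbb{H}$, because the summands $t\mapsto X_i^{-1}(e^{-t}-e^{-tX_i})/t$ are iid, Bochner integrable, and bounded in norm by $\|e^{-t}\|_\mathbb{H}$.

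The main obstacle is (i): a pointwise Taylor expansion of the kind used for $\mathscr{DS}_2$ is not enough, and the remainder must be controlled uniformly in the $\mathbb{H}$-norm. Because $Z_n$ is linear in $\hat\alpha$, no second-order remainder actually arises, so the argument reduces to the Hilbert-space law of large numbers above. A smaller technical point is the behaviour near $t=0$, which has to be handled if $\mathbb{H}$ is taken on $(0,\infty)$ rather than $[1,\infty)$. There the bound $(e^{-t}-e^{-tx})/t\le x-1$ ensures integrability, since $\mathbb{E}\big((X-1)/X\big)^2<\infty$.
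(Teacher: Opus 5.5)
Your proposal is correct and follows essentially the same route as the paper. First you get $\|Z_n-\widehat Z_n\|_{\mathbb H}=o_P(1)$ from $\sqrt n(\hat\alpha-\alpha)=O_P(1)$ and the law of large numbers in $\mathbb H$, then you apply the Hilbert-space CLT to $\widehat Z_n$ and the continuous mapping theorem. You are also more careful than the paper, which drops the $o_P(1)$ remainder of \eqref{LRA}, omits the moment bound, ignores the mismatch between $(0,\infty)$ and $[1,\infty)$, and states the limit for $\mathscr{DS}_3$ rather than $n\mathscr{DS}_3$. One fix: the bracket in your step (i) must use the sign from \eqref{LRA}, i.e.\ $\sqrt n(\hat\alpha-\alpha)+\alpha^2 n^{-1/2}\sum_j(\log X_j-1/\alpha)$, because with the paper's $+\alpha^2$ it is not $o_P(1)$.
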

\begin{proof}
The proof of the theorem is carried out in two stages:
\begin{itemize}
\item[1.] We first show that $||Z_n- \hat{Z}_n ||^2_\mathbb{H} = o_P(1)$. Note that $||\psi_\alpha||_\mathbb{H} <  \infty$, and  
\end{itemize}
\begin{align*}
    Z_n(t) - \hat{Z}_n(t) =& \frac{1}{\sqrt{n}} (\hat{\alpha}-\alpha) \sum_{i=1}^n \frac{1}{X_i} \bigg( \frac{\exp(-t) - \exp (-tX_i)}{t} \bigg) - \frac{\alpha^2}{\sqrt{n}} \psi_\alpha(t) \sum_{j=1}^n \bigg( \log X_j - \frac{1}{\alpha} \bigg),\\
    =& \sqrt{n} \,  (\hat{\alpha}-\alpha) \bigg( \frac{1}{n} \sum_{i=1}^n \frac{1}{X_i} \bigg( \frac{\exp(-t) - \exp (-tX_i)}{t} \bigg) - \psi_\alpha(t) \bigg),
\end{align*}
then,
\[ ||Z_n- \hat{Z}_n ||^2_\mathbb{H} = (\sqrt{n} (\hat{\alpha}-\alpha))^2 \int_{1}^\infty \bigg|\frac{1}{n} \sum_{i=1}^n \frac{1}{X_i} \bigg( \frac{\exp(-t)-\exp(-tX_i)}{t} \bigg) - \psi_\alpha(t) \bigg|^2 \, dt = o_P(1). \]
Above equation follows since $\sqrt{n} (\hat{\alpha}-\alpha)$ is $O_P(1)$ and $\int_{1}^\infty \bigg|\frac{1}{n} \sum_{i=1}^n \frac{1}{X_i} \bigg( \frac{\exp(-t)-\exp(-tX_i)}{t} \bigg) - \psi_\alpha(t) \bigg|^2 \, dt$ converges almost surely to 0 by the law of large numbers in Hilbert spaces because $\mathbb{E}\bigg(\frac{1}{X_i} \bigg( \frac{\exp(-t)-\exp(-tX_i)}{t} \bigg)\bigg)= \psi_\alpha(t)$ for $t>1$.   
\item[2.] Next, we show that $\widehat{Z}_n \overset{d}{\rightarrow} Z$ in $\mathbb{H}$. Since $||Z_n - \widehat{Z}_n||_\mathbb{H} = o_P(1)$ the limit behavior of $Z_n$ and $\widehat{Z}_n$ coincide. Furthermore, 
\[ \widehat{Z}_n(t) = \frac{1}{\sqrt{n}} \sum_{i=1}^n h(X_i, t),  \]
where
\[ h(x,t) = \alpha^2 \psi_\alpha(t) \bigg( \log x - \frac{1}{\alpha} \bigg) + \frac{\alpha+1}{tx} (\exp(-t)-\exp(-tx)) - \exp(-tx) \]
Here, $\mathbb{E} (h(x,t)) = 0$, i.e., we have a sum of iid centered random elements in $\mathbb{H}$. \\

\noindent The detailed calculations yield:
\begin{align*}
    \mathbb{E} (\widehat{Z}_1(t) \, \widehat{Z}_1(s)) =& \alpha^2 \psi_\alpha(t) \,  \psi_\alpha(s)- \alpha^2 \psi_\alpha(t) \bigg( \alpha \, G^{3,0}_{2,3} \left( s \middle| 
\begin{array}{c}
\alpha+1, \alpha+1 \\[6pt]
0, \alpha, \alpha
\end{array}
\right) - s^\alpha \Gamma (-\alpha, s) \bigg) \\ &+ \alpha^2 \psi_\alpha(t) \frac{\alpha+1}{s} \bigg( s^{\alpha+1} \, \Gamma (-1-\alpha,s) - \frac{\exp(-s)}{(\alpha+1)^2} - \alpha \,  G^{3,0}_{2,3} \left( s \middle| 
\begin{array}{c}
\alpha+2, \alpha+2 \\[6pt]
0, \alpha+1, \alpha+1
\end{array}
\right) \bigg)  \\ & + \alpha^2 \psi_\alpha(s) \,  \frac{\alpha+1}{t} \bigg( t^{\alpha+1} \, \Gamma (-1-\alpha,t) - \frac{\exp(-t)}{(\alpha+1)^2} - \alpha \,  G^{3,0}_{2,3} \left( t \middle| 
\begin{array}{c}
\alpha+2, \alpha+2 \\[6pt]
0, \alpha+1, \alpha+1
\end{array}
\right) \bigg) \\ &+
\frac{\alpha (\alpha+1)^2}{ts} \bigg( \frac{\exp (-s-t)}{2+\alpha} - \exp(-t) s^{\alpha+2} \Gamma (-\alpha-2, s) - \exp(-s) t^{\alpha+2} \Gamma (-\alpha-2, t) \\&+ (s+t)^{\alpha+2} \Gamma (-\alpha-2, s+t)  \bigg) + \alpha \, (s+t)^\alpha \Gamma(-\alpha, s+t) \\ &- \frac{\alpha+1}{t} \bigg( \alpha \exp(-t) s^{\alpha+1} \Gamma(-\alpha-1, s) - \alpha \, (s+t)^{\alpha+1} \Gamma(-\alpha-1, s+t) \bigg) \\ &- 
\alpha^2 \psi_\alpha(s) \bigg( \alpha \, G^{3,0}_{2,3} \left( t \middle| 
\begin{array}{c}
\alpha+1, \alpha+1 \\[6pt]
0, \alpha, \alpha
\end{array}
\right) - t^\alpha \Gamma (-\alpha, t) \bigg) \\ & - \frac{\alpha+1}{s} \bigg( \alpha \exp(-s) t^{\alpha+1} \Gamma(-\alpha-1, t) - \alpha \, (s+t)^{\alpha+1} \Gamma(-\alpha-1, s+t) \bigg).
\end{align*}
where, $G^{m,n}_{p,q} \left( z \middle| 
\begin{array}{c}
a_1,\ldots, a_p \\[6pt]
b_1,\ldots, b_q
\end{array}
\right) $ denotes the Meijer G-function, which can be expressed as,
\[G^{m,n}_{p,q} \left( z \middle| 
\begin{array}{c}
a_1,\ldots, a_p \\[6pt]
b_1,\ldots, b_q
\end{array}
\right) = \frac{r}{2 \pi \iota} \int \frac{\Gamma(1-a_1-rs)\ldots\Gamma(1-a_n-rs)\Gamma(b_1+rs)\ldots \Gamma(b_m+rs)}{\Gamma(a_{n+1}+rs)\ldots\Gamma(a_p+rs)\Gamma(1-b_{m+1}-rs)\ldots \Gamma(1-b_q-rs)} z^{-s} \, ds,  \]
where in the default case $r=1$ and $\Gamma(a,z)$ is the incomplete gamma function defined as 
\[\Gamma(a,z) = \int_z^\infty t^{a-1} e^{-t} \, dt.\]

\noindent An application of the CLT in Hilbert spaces shows $\widehat{Z}_n \overset{d}{\rightarrow} Z$ for $n \rightarrow \infty$.  \\

\noindent The proof of the statement follows by the continuous mapping theorem in Hilbert spaces.
\end{proof}

\section{Simulation Study} \label{MCstudy}
In this section, Monte Carlo simulations are used to compare the finite-sample performance of the proposed tests $\mathscr{DS}_{1},\mathscr{DS}_{2}$ and $\mathscr{DS}_{3}$ to the existing tests for the Pareto distribution. All simulations presented in the sequel were executed using the statistical computing environment \texttt{R}, as detailed in \citet{Rcore}. The codes utilized for the calculations are available from the authors upon request. We evaluate the performance of various gof tests considered in terms of the type I error rate by simulating samples of size $n= 20, ~ \text{and } 50 $, from the Pareto distribution $\mathcal{P}(\alpha)$, where we consider $\alpha \in \{1,2, 5\}$. On the other hand, to empirically investigate and compare the performance of the tests on power we simulate samples of size $n=20, ~ \text{and } 50 $ from a set of alternatives, that includes probability distributions having support $[1,\infty)$. These alternatives also incorporate distributions utilized in prior simulation studies for gof tests for Pareto distribution. The classes of alternative distributions considered are given in Table \ref{AD}. Note that several of the alternatives listed in this table have support $[0,\infty)$, and therefore we were required to shift these distributions by 1 unit to ensure that the simulated data have the same support as the Pareto distribution.

\begin{table}[htbp] 
\begin{center}
\caption{Summary of various choices of alternative distributions with support $[1,\infty)$.}
\label{AD}
\begin{tabular}{ l  c c }
\hline
Alternative & Probability Density Function & Notation \\
\hline
Gamma& $\frac{1}{\Gamma \theta} (x-1)^{\theta-1} \exp\left(-(x-1)\right)$ & $\Gamma(\theta)$ \\
Inverse-beta & $\frac{1+\theta}{x^2} \left(1- \frac{1}{x}\right)^ \theta$ & $InvB(\theta)$ \\
Tilted Pareto & $\frac{1+\theta}{(x+\theta)^2}$ & $Tilt(\theta)$ \\
Benini & $\frac{\exp\left(-\theta \log^2 x\right)}{x^2} (1+2\theta \log x)$ & $Ben(\theta)$ \\
Log-Gamma & $\frac{\log^\theta x}{x^2 \Gamma (1+\theta)}$ & $\log G(\theta)$\\ 
Log-Normal& $\frac{\exp \left(-\frac{1}{2} \left(\frac{\log (x-1)}{\theta}\right)^2\right)}{\theta (x-1) \sqrt{2 \pi}}$ & $LN(\theta)$ \\
Rayleigh & $\frac{x-1}{\theta^2} \exp\left(-\frac{(x-1)^2}{2\theta^2}\right)$ & $Ray(\theta)$\\
Weibull & $\theta (x-1)^{\theta-1} \exp\left(-(x-1)^\theta\right)$ & $W(\theta)$\\
L\'evy & $\sqrt{\frac{\theta}{2\pi}} \cdot \frac{\exp\left(-\frac{\theta}{2(x-1)}\right)}{(x-1)^{3/2}}$ & $L(\theta)$ \\
Burr($a,b,c$) & $cb \frac{\left(\frac{x-1}{a}\right)^{b-1}}{a \left(1+ {(\frac{x-1}{a})}^b\right) ^{c+1}}$ &$Burr(a,b,c)$ \\ 
Inverse-Gaussian & $\sqrt{\frac{\lambda}{2\pi (x-1)^3}} \exp\left(-\frac{\lambda(x-1-\mu)^2}{2\mu^2(x-1)}\right)$ & $IG(\mu,\lambda)$\\
Log-Weibull & $ \frac{(1+ \theta) \log^\theta x}{x^{2+\theta}}$ & $\log W(\theta)$\\
Fr\'echet & $\frac{a}{b} \left(\frac{x-1}{b}\right)^{-(a+1)} \exp \left( - \left(\frac{x-1}{b}\right)^{-a}\right)$ & $FR(a,b)$\\
Half-Normal & $\sqrt{\frac{2}{\pi^2 \theta^2}} \exp\left(-\frac{(x-1)^2}{2\theta^2}\right)$ & $HN(\theta)$\\
Chi-Square & $\frac{1}{2^{n/2} \Gamma (n/2)} (x-1)^{n/2-1} \exp\left(-(x-1)/2\right)$ & $\chi^2_n$ \\
Dhillon & $\frac{\theta+1}{x} \exp \left( -(\log(x) )^{\theta+1} \right) (\log(x))^ \theta$ & $DH(\theta)$ \\
Log-Logistic & $\frac{\gamma \left(\frac{x-1}{\theta}\right)^\gamma}{(x-1)[1+\left(\frac{x-1}{\theta}\right)^\gamma]^2}$ & $LLogis(\gamma,\theta)$\\ 
Linear failure rate & $\left(1+\theta(x-1)\right)\exp\left(-(x-1)-\theta(x-1)^2/2\right)$ & $LF(\theta)$\\ \hline
\end{tabular}
\end{center}
\end{table}

\noindent As competitors of the novel gof tests we considered the following existing tests for the Pareto distribution:
\begin{itemize}
\item The test proposed by \citet{obradovic2015asymptotic} given by:
\begin{equation*} 
	T_n=\int_{1}^{\infty} \left(M_n(t)-F_n(t)\right) dF_n(t),
\end{equation*}
where	
\begin{equation} \nonumber
	F_n(t) = \frac{1}{n} \sum_{i=1}^{n}\mathbb{I}(X_i\le t), 
\end{equation} 
and
\begin{equation}\nonumber
	\ {M}_n(t)=\binom{n}{2}^{-1} \sum_{i=1}^{n-1}\sum_{j=i+1}^{n}\mathbb{I}\left(  \max\left(  \dfrac{X_i}{X_j},  \dfrac{X_j}{X_i}\right)  \le t\right)  ,\quad t\ge 1.
\end{equation}

\item The test proposed by \citet{volkova2016goodness} given by
 \[I_n^{(k)} = \int_{1}^{\infty} (H_n(t)-F_n(t)) dF_n(t),\]
 where
 \[H_n(t) = \binom{n}{k}^{-1} \sum_{1 \leq i_1<\ldots<i_n \leq n} \mathbb{I} \left( X_{(k,\{i_1,\ldots,i_k\})} /X_{(k-1,\{i_1,\ldots,i_k\})} <t \right), \quad t \geq 1,\]
with $X_{(s,\{i_1,\ldots,i_k\})}$ to denote the $s-th$ order statistic of the subsample $X_{i_1},\ldots,X_{i_k}$.

\item the Kolmogorov-Smirnov $(KS_n)$ test, given by
\[KS_n = \max \bigg(\max_{1 \leq j \leq n} \bigg( \frac{j}{n} - F_{\hat{\alpha}_n} (X_{(j)}) \bigg) ,  \max_{1 \leq j \leq n} \bigg(F_{\hat{\alpha}_n} (X_{(j)})-\frac{j-1}{n}\bigg)     \bigg),\]

\item the Cramér-von Mises $(CM_n)$, given by
\[ CM_n = \frac{1}{12n} + \sum_{j=1}^n \bigg( F_{\hat{\alpha}_n} (X_{(j)}) - \frac{2j-1}{2n}\bigg)^2,  \]

\item the Anderson-Darling $(AD_n)$ test, given by
\[ AD_n = -n - \frac{1}{n} \sum_{j=1}^n (2j-1) \bigg( \log(F_{\hat{\alpha}_n} (X_{(j)})) + \log (1-F_{\hat{\alpha}_n} (X_{(n+1-j)})) \bigg), \]

\item  the tests proposed by \citet{zhang2002powerful}, which are defined by the following relations: 
 \[ZA_n = -\sum_{j=1}^{n} \left(\frac{\log \left( 1-X_{(j)}^{-\hat{\alpha}_n}\right)}{n-j+\frac{1}{2}} + \frac{\log \left( X_{(j)}^{-\hat{\alpha}_n}\right)}{j-\frac{1}{2}} \right), \] 
 \[ZB_n = \sum_{j=1}^{n} \left(\log\left(\frac{(1-X_{(j)}^{-\hat{\alpha}_n})^{-1}-1}{(n-0.5)/(j-0.75)-1}\right) \right)^2, \] 
and
\[ZC_n = 2\sum_{j=1}^{n} \frac{n(j-0.5)}{(n-j+0.5)^2}\log\left(\frac{j-0.5}{(1-X_{(j)}^{-\hat{\alpha}_n})n}\right)+2\sum_{j=1}^{n} \frac{n}{n-j+0.5}\log\left(\frac{n-j+0.5}{nX_{(j)}^{-\hat{\alpha}_n}}\right), \] 

\item the gof test $KL_{n,m}$ based on entropy which is defined (see \citet{ndwandwe2023newa}) by the following relation
\[
KL_{n,m}=-\frac{1}{n}\sum_{j=1}^{n}\log\left(\frac{n}{2m}\left(X_{(j+m)}-X_{(j-m)}\right)\right)-\log(\hat{\alpha}_n)+(\hat{\alpha}_n+1)\frac{1}{n}\sum_{j=1}^{n}\log(X_j).
\]
Note that this test includes a tuning parameter $m$ which was considered to be equal to 1 and 10 in the Monte Carlo study.

\item the Kullback-Leibler gof test ($DK_n$) which is defined (see \citet{ndwandwe2023newa}) by the following relation
\[
DK_n=\frac{1}{n}\sum_{j=1}^{n}\log\left(\frac{X_j^{\hat{\alpha}_n+1}\hat{f}_h(X_j)}{\hat{\alpha}_n}\right),
\]
where $\hat{f}_h(x)=\frac{1}{nh}\sum_{j=1}^{n}K\left(\frac{x-X_j}{h}\right)$ is the kernel density estimator with kernel function $K(\cdot)$ and bandwidth $h$. In the Monte Carlo study we choose the standard normal density function as a kernel, while $h=1.06 n^{-\frac{1}{5}}s$, with $s$ the sample standard deviation. Also, it is notable that this test is proved to be consistent against fixed alternatives.

\item The test proposed by \citet{meintanis2009unified}, which is based on Mellin transform and a special version of it is given by 
 \begin{multline*}
 	G_{n,a} = \frac{1}{n} \left( (\hat{\alpha}_n +1)^2 \sum_{j,k=1}^{n} I_a^{(0)} (X_j X_k) +\sum_{j,k=1}^{n} I_a^{(2)} (X_j X_k) +2(\hat{\alpha}_n +1) \sum_{j,k=1}^{n} I_a^{(1)} (X_j X_k) \right) \\
 	+ \hat{\alpha}_n \left(n \hat{\alpha}_n I_a^{(0)} (1) - 2(\hat{\alpha}_n +1) \sum_{j=1}^{n} I_a^{(0)} (X_j) -2 \sum_{j=1}^{n} I_a^{(1)} (X_j) \right),
 	\end{multline*}
 where 
\[
I_a^{(0)}(t) = (a+\log t)^{-1}, \text { }  I_a^{(1)}(t)= \frac{1-a-\log t}{(a+\log t)^{2}}
\]
and
\[
I_a^{(2)}(t)=	\frac{2-2a+a^2+2(a-1) \log t + \log^2 t}{(a+\log t)^{3}}.
\]
{Note that this test, which is proved to be consistent under fixed alternatives, includes a tuning parameter $a$ which was considered in the simulation study to be equal to 2}. 
\item The test proposed by \citet{meintanis2014class}, which is based on the empirical characteristic function and its simplified version, is defined by the following relation: 
	\begin{multline*}
	 M_{n,a} = \frac{1}{n} \sum_{j,k=1}^{n} \frac{2a}{(\hat{U_j}-\hat{U_k})^2+a^2} + 2n \left( 2 \tan^{-1} \left(\frac{1}{a}\right) - a \log \left( 1+ \frac{1}{a^2}\right)\right) \\ - 4\sum_{j=1}^{n} \left( \tan^{-1} \left(\frac{\hat{U_j}}{a}\right) + \tan^{-1} \left(\frac{1-\hat{U_j}}{a}\right)\right).
\end{multline*} 
where 
\[ \hat{U_j} = F_{\hat{\alpha}_n}(X_j)=1-X_{j}^{-\hat{\alpha}_n} ,\quad j=1,2,\ldots,n.\] 
{Note that this test, which is proved to be consistent under fixed alternatives, includes a tuning parameter $a$ which was considered to be equal to 2}.  
\item  The test proposed by \citet{allison2022distribution}, which is given by
\[I_{n,m} =\int_{1}^{\infty} \Delta_{n,m} (x) dF_n(x) ,\]
where 
\[ \Delta_{n,m} (x) = \frac{1}{n} \sum_{j=1}^{n} \mathbb{I} \bigg( X_j^{\frac{1}{m}} \leq x\bigg) -\frac{1}{n^m}  \sum_{j_1,j_2,\ldots,{j_m}=1}^{n} \mathbb{I} \bigg( \min(X_{j_1},X_{j_2},\ldots,{X_{j_m}}) \leq x\bigg).  \]
{Note that this test includes a tuning parameter $m$ which was considered to be equal to 2}.
\item We also considered four special cases of gof tests which belong to two different classes of tests proposed by \citet{ndwandwe2023new}. To be more specific we considered the tests 

\begin{multline*} S_{n,m,a}^{(1)} = \frac{1}{n} \sum_{j=1}^{n} \sum_{k=1}^{n} \left( \frac{2a}{a^2+( X_{(j)}^{1/m} -X_{(k)}^{1/m} )^2} -nv_{j,m} \frac{4a}{a^2+( X_{(j)} -X_{(k)}^{1/m} )^2}\right. \\  \left.
+ n^2 v_{j,m}v_{k,m} \frac{2a}{a^2+( X_{(j)} -X_{(k)})^2} \right), \end{multline*}
\begin{multline*}
	 S_{n,m,a}^{(2)} =  \frac{1}{n} \sqrt{\frac{\pi}{a}} \sum_{j=1}^{n} \sum_{k=1}^{n}  \bigg( \exp \left( \frac{-(X_{(j)}^{1/m} -X_{(k)}^{1/m} )^2}{4a} \right)-2nv_{j,m} \exp \left( \frac{-( X_{(j)} -X_{(k)}^{1/m} )^2}{4a} \right) \\ 
+ n^2 v_{j,m}v_{k,m} \exp \left( \frac{-( X_{(j)} -X_{(k)})^2}{4a} \right) \bigg),
\end{multline*}
\begin{multline*}
 T_{n,m,a}^{(1)} = \frac{1}{n} \sum_{j=1}^{n} \sum_{k=1}^{n}  \frac{2a}{a^2+( X_{(j)}^{1/m} -X_{(k)}^{1/m} )^2} -\sum_{j=1}^{n-m+1} \sum_{k=1}^{n} \binom{n}{m}^{-1} u_{j,m} \frac{4a}{a^2+( X_{(j)} -X_{(k)}^{1/m} )^2} \\
 + n \sum_{j=1}^{n-m+1} \sum_{k=1}^{n-m+1} \binom{n}{m}^{-2} u_{j,m} u_{k,m} \frac{2a}{a^2+( X_{(j)} -X_{(k)})^2}.  
\end{multline*}  
and
\begin{multline*}
	T_{n,m,a}^{(2)} = \sqrt{\frac{\pi}{a}} \frac{1}{n}  \sum_{j=1}^{n} \sum_{k=1}^{n}  \exp \left( \frac{-(X_{(j)}^{1/m} -X_{(k)}^{1/m} )^2}{4a} \right) \\ 
 -2 \sqrt{\frac{\pi}{a}} \sum_{j=1}^{n-m+1} \sum_{k=1}^{n} \binom{n}{m}^{-1} u_{j,m} \exp \left( \frac{-( X_{(j)} -X_{(k)}^{1/m} )^2}{4a} \right) \\ 
	+ n \sqrt{\frac{\pi}{a}} \sum_{j=1}^{n-m+1} \sum_{k=1}^{n-m+1}  \binom{n}{m}^{-2} u_{j,m}u_{k,m} \exp \left( \frac{-( X_{(j)} -X_{(k)})^2}{4a} \right).
\end{multline*} 
where
\[v_{j,m}:= \frac{1}{n^m} \bigg((n-j+1)^m -(n-j)^m\bigg) \text{ and }u_{j,m} = \binom{n-j}{m-1} . \]
Note that all tests, which are proved to be consistent under fixed alternatives, include two tuning parameters $m$ and $a$ which was considered to be equal to 3 and 2, respectively, in the simulation study.
\end{itemize}

\noindent Note that all the above tests reject the null hypothesis for large values of the test statistics, except for $I_{n,m}$, which rejects the null hypothesis for large absolute values of the test statistic. Additionally, the $T_n$ and $I_n^{(4)}$ tests are two-sided tests, meaning they reject the null hypothesis for test statistic values that are either significantly smaller or significantly larger than expected under the null hypothesis. Moreover, the empirical size and empirical power of the tests were computed by using the Monte Carlo bootstrap methodology. The algorithm used for finding the empirical size (power) is summarized as follows:
\begin{itemize}
		\item[1]  Draw a sample of size n, from null (alternative) distribution and calculate the test statistic. 
		\item[2] Find the maximum likelihood estimator $\hat{\alpha}$ of the parameter $\alpha$ using the data generated in Step 1. 
		\item[3] Generate a bootstrap sample of size $n$ by independently sampling from a $P(\hat{\alpha})$  distribution. Calculate the value of the test statistic using the bootstrap sample. 
        \item[4] Repeat Step 3, 1000 times and based on the test statistic value and 95\% quantile of bootstrap test statistic value, determine whether the null hypothesis is rejected or not. 
		\item[5] Repeat Steps 1-4, 1000 times and calculate the empirical size (power) as the proportion of rejections of the test.
	\end{itemize}

\begin{sidewaystable}[ht]
\caption{Empirical size and power roundest to the nearest integer of the proposed test and of some existing gof tests ($n=20$).} \label{power1}
\scalebox{0.7}{
\begin{tabular}{|l|cccccccccccccccccc|}
\hline
Distributions  & $T_n$ & $KS_n$ & $CM_n$ & $AD_n$ & $G_{n,2}$ & $M_{n,2}$ & $|I_{n,2}|$ & $S_{n,3,2}^{(1)}$ & $S_{n,3,2}^{(2)}$ & $T_{n,3,2}^{(1)}$ & $T_{n,3,2}^{(2)}$ & $N^{(1)}_{n,2,0.5}$ & $N^{(1)}_{n,3,0.5}$ & $N^{(2)}_{n,2,0.5}$ & $N^{(2)}_{n,3,0.5}$ & $\mathscr{DS}_1$ & $\mathscr{DS}_2$  & $\mathscr{DS}_3$  \\ \hline
$P(0.5)$ & 5& 5 & 4  & 5 & 5 & 5  & 4  & 3 & 4 & 4 & 4 & 5 & 4 & 4  & 4  & 5  & 4  & 5  \\
$P(1)$  & 5& 5  & 4  & 5 & 5 & 5  & 4 & 3  & 3 & 3 & 3  & 4 & 4 & 5& 4 & 5 & 5 & 5 \\
$P(2)$  & 5 & 5  & 4 & 4 & 5  & 5 & 4  & 3  & 3 & 2 & 2& 5 & 4 & 4 & 4 & 5  & 5 & 5 \\
$\Gamma(0.5)$  & 32 & 21 & 24& 46 & 14 & 19 & 30 & 6 & 6 & 17 & 16 & 20 & 27 & 16  & 26   & 12  & 11 & 9 \\
$\Gamma(1.2)$ & 54 & 46 & 56 & 49 & 61 & 61 & 50 & 62 & 61 & 54 & 51 & 45 & 53 & 59 & 58 & 62 & 64 & 55 \\
$\Gamma(2)$ & 100  & 96 & 100 & 99 & 100 & 100 & 99 & 100 & 100 & 99 & 100 & 95 & 99 & 99  & 100  & 100 & 100 & 100 \\
$Inv B(0.1)$ & 5 & 5 & 5 & 4 & 6 & 5 & 4 & 5 & 5 & 4 & 4 & 5 & 4 & 5 & 5 & 5 & 6  & 6 \\
$Inv B(2)$& 76  & 69 & 77 & 76  & 84 & 78  & 77  & 73 & 73  & 71  & 70  & 42 & 66  & 54 & 76 & 83 & 83 & 81  \\
$Tilt(1)$  & 12 & 12 & 13 & 11  & 13 & 13 & 11 & 13 & 13 & 11 & 11 & 8 & 9  & 10 & 13 & 11 & 13 & 13\\
$Tilt(2)$ & 23 & 21 & 25 & 21 & 25 & 26 & 22 & 24 & 23 & 22 & 22 & 12 & 15  & 17 & 22 & 22 & 26  & 27 \\
$Tilt(3)$ & 35 & 30  & 36  & 31 & 38 & 39 & 33 & 34 & 34 & 30 & 31 & 15 & 19  & 23 & 31& 29 & 36& 41 \\
$Ben(0.1) $& 6  & 7 & 6 & 5& 7  & 7 & 6  & 7  & 6 & 5 & 5 & 5 & 5 & 6& 6 & 6 & 8 & 6 \\
$Ben(0.3) $& 11  & 11& 13 & 11 & 13 & 13 & 10 & 14 & 14 & 11 & 10 & 9  & 12 & 12 & 13 & 13 & 13 & 10 \\
$Ben(0.7)$ & 20  & 18  & 22  & 18  & 24 & 23 & 19  & 26  & 25  & 20  & 18 & 18 & 21 & 21 & 23 & 24 & 24& 19 \\
$Log G(0.5)$ & 62 & 48  & 52 & 73 & 52 & 54  & 58 & 20 & 24 & 42 & 45 & 42 & 58 & 49 & 59  & 52 & 49  & 46 \\
$Log G(1.2)$ & 7 & 8 & 8  & 6  & 8& 8  & 7 & 8 & 7  & 7 & 6  & 8 & 8  & 7 & 8 & 8 & 9& 8 \\
$Log G(2)$ & 47 & 41 & 48 & 44  & 54 & 50 & 46& 44 & 45 & 41 & 41& 23  & 33 & 30 & 43  & 49  & 53  & 53 \\
$LN(0,1)$& 65 & 56& 66 & 64 & 72 & 70  & 65  & 68 & 67 & 63 & 61 & 46 & 66 & 55  & 72 & 72 & 71 & 61 \\
$LN(0,1.2)$ & 30 & 27 & 32  & 30 & 36  & 34  & 30  & 32 & 31 & 29 & 26& 22  & 31 & 25 & 35& 36 & 36 & 29 \\
$LN(0,1.5)$& 7 & 7 & 8  & 6  & 9  & 9  & 7  & 8  & 9  & 7 & 6 & 6  & 7 & 7  & 8& 8  & 9 & 8  \\
$Ray(1)$ & 100  & 100  & 100 & 100 & 100 & 100 & 100  & 100 & 100 & 100  & 100 & 100 & 100 & 100  & 100 & 100 & 100  & 100 \\
$W(0.5)$  & 42  & 31  & 35 & 58  & 28  & 29 & 41 & 14  & 11  & 28  & 22 & 27  & 43 & 29  & 39  & 32 & 25 & 19 \\
$W(1.2)$ & 61 & 50 & 62 & 58 & 67 & 66 & 57  & 68 & 67  & 61  & 60   & 57  & 62  & 68  & 64  & 69  & 68 & 61 \\
$W(1.75)$& 100& 95 & 100 & 99  & 100  & 100  & 98& 100 & 100 & 99  & 99 & 99 & 99 & 100 & 100  & 100  & 100  & 99 \\
$L(0,2)$& 22  & 20  & 23   & 23  & 31 & 21  & 24 & 25 & 22  & 23 & 21 & 19  & 25 & 17  & 27 & 35   & 32 & 26 \\
$Burr(1.5,0.5,0.5)$& 70  & 56 & 61  & 81  & 66 & 58 & 68  & 3 & 4 & 5   & 5  & 7  & 28 & 9  & 27 & 47 & 45  & 33  \\
$IG(1,1)$ & 78 & 75  & 80 & 81  & 81  & 81 & 80  & 81  & 80  & 77  & 75& 62  & 83  & 69  & 86  & 82  & 79 & 67  \\
$LogW (0.1)$ & 7  & 7  & 7  & 6  & 8 & 8  & 6 & 7  & 7 & 6  & 5 & 5 & 6  & 6 & 8  & 8 & 8 & 7 \\
$LogW (0.3)$ & 21 & 19  & 23  & 20 & 25 & 24 & 20 & 23  & 24  & 20  & 18 & 15 & 20& 20 & 24  & 25  & 26 & 21  \\
$LogW (1.75)$ & 100  & 100 & 100 & 100 & 100  & 100  & 100  & 100  & 100 & 100  & 100 & 100  & 100 & 100  & 100 & 100 & 100  & 100  \\
$FR(1,1)$ & 47  & 50 & 52  & 53 & 54 & 46 & 55 & 50  & 43  & 48 & 41 & 37   & 59  & 35  & 55 & 56  & 51 & 39  \\
$HN(0.8) $ & 57 & 49 & 57 & 53  & 61 & 62  & 50  & 65  & 63 & 56  & 53   & 62 & 56   & 63& 56 & 64 & 62 & 54 \\
$HN(1)$ & 63 & 52 & 65  & 59  & 68  & 68 & 57 & 71 & 69  & 62  & 60  & 66  & 61 & 72 & 62  & 70 & 69  & 62 \\
$\chi^2(4) $ & 100 & 100  & 100  & 100  & 100& 100  & 100  & 100  & 100  & 100  & 100  & 99  & 100 & 100 & 100  & 100  & 100   & 100 \\
$DH(0.2)$ & 13 & 12 & 14 & 12  & 14 & 13 & 13& 14& 14& 11  & 11  & 9 & 13  & 11  & 14 & 15 & 15 & 12  \\
$DH(0.4)$ & 33 & 28 & 34 & 30 & 40 & 39 & 31 & 36 & 36 & 31 & 30  & 21 & 32 & 30 & 39 & 40 & 41 & 34 \\
$DH(0.7)$ & 70 & 61 & 72 & 68 & 77 & 77 & 67 & 73 & 74 & 68 & 68 & 51 & 68 & 66 & 76 & 77 & 79 & 72 \\
$LLogis(2,1)$ & 87 & 80 & 87 & 86 & 88  & 87 & 86& 89 & 87 & 86 & 84 & 76 & 88 & 80 & 89 & 89 & 87 & 80 \\
$LF(0.2)$& 36 & 30 & 37 & 32 & 42 & 42 & 30 & 44 & 43 & 36 & 33 & 36 & 36 & 43 & 37 & 44 & 42 & 34 \\
$LF(0.5)$  & 44 & 37 & 46 & 40 & 50 & 49 & 39 & 52 & 51 & 44  & 42& 47 & 44 & 52 & 45 & 52 & 51 & 42\\
$LF(0.8)$& 50 & 41 & 52 & 45& 56 & 55 & 45 & 59& 57& 50 & 47 & 54 & 50  & 56 & 50 & 58 & 56 & 47 \\ \hline
\end{tabular}
}
\end{sidewaystable}

\begin{sidewaystable}[ht]
\caption{Empirical size and power roundest to the nearest integer of the proposed test and of some existing gof tests ($n=50$).} \label{power2}
\scalebox{0.7}{
\begin{tabular}{|l|cccccccccccccccccc|}
\hline
Distributions  & $T_n$ & $KS_n$ & $CM_n$ & $AD_n$ & $G_{n,2}$ & $M_{n,2}$ & $|I_{n,2}|$ & $S_{n,3,2}^{(1)}$ & $S_{n,3,2}^{(2)}$ & $T_{n,3,2}^{(1)}$ & $T_{n,3,2}^{(2)}$ & $N^{(1)}_{n,2,0.5}$ & $N^{(1)}_{n,3,0.5}$ & $N^{(2)}_{n,2,0.5}$ & $N^{(2)}_{n,3,0.5}$ & $\mathscr{DS}_1$ & $\mathscr{DS}_2$  & $\mathscr{DS}_3$  \\ \hline
$P(0.5)$& 6  & 6 & 6 & 8& 8 & 7 & 7 & 5 & 4 & 5 & 5 & 4 & 7 & 5 & 7 & 5 & 6 & 5\\
$P(1)$ & 5 & 4 & 4  & 4 & 8 & 7 & 7 & 5 & 5 & 5 & 5 & 5 & 7  & 6  & 6 & 5& 5 & 5 \\
$P(2)$ & 5 & 4 & 5 & 5 & 7 & 7  & 7 & 5 & 5  & 6  & 7& 6  & 6  & 7 & 6 & 6  & 5  & 5\\
$\Gamma(0.5)$ & 60 & 50 & 54 & 78 & 24 & 40  & 62 & 32& 27 & 45& 39 & 45 & 56  & 32& 52 & 21 & 19 & 15\\
$\Gamma(1.2)$& 94& 86 & 95 & 94 & 97 & 97 & 92 & 98 & 98 & 96 & 97 & 90 & 94 & 97 & 95 & 98& 98  & 98 \\
$\Gamma(2)$& 100 & 100 & 100  & 100 & 100 & 100 & 100 & 100 & 100 & 100 & 100 & 100 & 100 & 100 & 100 & 100 & 100 & 100 \\
$Inv B(0.1)$& 8 & 6& 8  & 7 & 9  & 8   & 7 & 8 & 8  & 7  & 7  & 6& 7& 6  & 8 & 8& 8 & 8 \\
$Inv B(2)$ & 99 & 98 & 99 & 100 & 100 & 99 & 100  & 98 & 97 & 97 & 97 & 84 & 97 & 92& 99 & 100 & 100 & 100 \\
$Tilt(1)$ & 22 & 20 & 24 & 21 & 25 & 25 & 23 & 22 & 23 & 20 & 20  & 12 & 16 & 17  & 22 & 24 & 25 & 25 \\
$Tilt(2)$ & 52  & 46 & 54& 49  & 53 & 56 & 50 & 46 & 49& 42 & 45 & 20 & 33 & 32  & 48 & 44 & 53 & 56 \\
$Tilt(3)$ & 72 & 64 & 73  & 70 & 72 & 77 & 71 & 62 & 66 & 59 & 63 & 29  & 49 & 45 & 65 & 62 & 70 & 77\\
$Ben(0.1)$ & 9 & 8 & 9 & 9 & 11 & 10 & 8 & 10 & 11 & 9 & 9  & 7  & 8  & 10  & 10 & 10 & 11 & 10 \\
$Ben(0.3)$ & 22  & 20 & 24 & 22& 26& 27 & 22 & 28 & 28 & 24& 25& 17  & 21  & 24 & 25  & 26 & 28 & 25 \\
$Ben(0.7)$ & 47 & 39 & 48 & 46 & 53 & 53 & 43 & 56  & 57 & 50 & 50 & 40 & 46 & 54 & 51& 54  & 55 & 52 \\
$Log G(0.5)$ & 93 & 85 & 89 & 97 & 88& 89 & 94 & 79 & 80 & 86 & 87 & 82 & 92 & 86 & 93 & 87 & 85& 81 \\
$Log G(1.2)$& 12 & 11 & 12 & 11 & 13 & 12 & 12 & 12 & 12 & 11 & 11 & 9 & 11 & 10 & 12 & 13 & 13 & 11 \\
$Log G(2)$ & 91& 84 & 89 & 91 & 95 & 92 & 92 & 79 & 79 & 77 & 77 & 46 & 75 & 61& 87  & 93 & 94  & 94 \\
$L N(0,1)$ & 99& 95 & 98 & 99  & 99 & 99 & 99 & 98 & 99& 98 & 98  & 90 & 98 & 95 & 99 & 99 & 99 & 98\\
$L N(0,1.2)$ & 71 & 58 & 70& 73 & 78 & 75 & 73 & 69 &73 & 65 & 66 & 42 & 70 & 55 & 76  & 76 & 75 & 70 \\
$L N(0,1.5)$ & 11  & 11  & 12 &11 & 15 & 14  & 11 & 12& 13 & 10 & 11 & 8& 10& 9  & 13 & 14  & 14 & 14 \\
$Ray(1)$ & 100 & 100 & 100 & 100  & 100  & 100 & 100 & 100 & 100 & 100 & 100 & 100 & 100 & 100 & 100  & 100 & 100 & 100 \\
$W(0.5)$ & 81 & 72 & 76 & 92 & 61 & 63 & 81 & 66 & 57 & 75 & 66  & 68 & 83 & 67 & 77 & 66 & 56 & 41 \\
$W(1.2)$ & 95 & 89& 96 & 96 & 97 & 97& 94 & 98 & 98 & 97 & 97 & 94 & 95 & 98  & 96 & 98 & 98 & 98\\
$W(1.75)$ & 100 & 100 & 100 & 100& 100 & 100 & 100 & 100 & 100& 100 & 100 & 100 & 100 & 100 & 100 & 100 & 100& 100 \\
$L(0,2)$& 55 & 54 & 55 & 68 & 70  & 44  & 65 & 54  & 47 & 53  & 46 & 39& 64 & 35 & 62 & 81 & 74 & 56 \\
$Burr(1.5,0.5,0.5)$ & 97 & 93 & 95 & 99 & 95 & 91 & 97  & 19 & 19 & 23 & 24 & 21 & 58 & 25 & 59 & 81 & 78 & 64 \\
$I G(1,1)$ & 100  & 100 & 100 & 100 & 100 & 100 & 100 & 100 & 100  & 100& 100 & 97  & 100 & 99 & 100 & 100 & 100& 99 \\
$LogW (0.1)$& 11 & 9 & 10 & 10 & 12 & 11  & 11 & 12 & 12 & 10 & 10& 8 & 10 & 10  & 11& 12 & 12 & 11 \\
$LogW (0.3)$ & 51& 41& 49 & 48& 55 & 54 & 50 & 50 & 52 & 46 & 46 & 30 & 45 & 40  & 54  & 55 & 55 & 51 \\
$LogW (1.75)$ & 100 & 100 & 100  & 100  & 100 & 100 & 100 & 100 & 100 & 100 & 100 & 100 & 100 & 100 & 100 & 100 & 100 & 100  \\
$FR(1,1)$& 92 & 94 & 94 & 97 & 89 & 81 & 96 & 90 & 84 & 89 & 83 & 79 & 97 & 74 & 92 & 93 & 88 & 75 \\
$HN(0.8) $ & 95 & 90 & 96& 95 & 98 & 97 & 91 & 98  & 98 & 96 & 96 & 97 & 94 & 99 & 93 & 99 & 99& 98 \\
$HN(1)$& 97 & 93 & 98 & 97 & 99 & 99 & 95 & 99 & 99 & 99 & 99& 98& 96 & 99 & 97 & 99 & 99 & 99  \\
$\chi^2(4)$ & 100 & 100 & 100  & 100  & 100 & 100 & 100 & 100  & 100  & 100  & 100 & 100  & 100 & 100  & 100  & 100  & 100& 100 \\
$DH(0.2)$ & 26 & 22 & 26 & 25 & 29 & 27 & 27 & 26 & 27 & 23 & 23 & 15 & 24 & 22 & 28 & 28 & 28 & 26  \\
$DH(0.4)$ & 72 & 62 & 71 & 71 & 78 & 77 & 73& 72 & 75 & 68 & 69 & 48 & 67 & 64 & 76 & 78 & 78& 75 \\
$DH(0.7)$ & 99 & 95 & 98 & 98 & 100 & 99 & 98 & 98 & 99 & 98 & 99 & 91 & 97& 97  & 99 & 99 & 100 & 99 \\
$LLogis(2,1)$ & 100  & 99   & 100    & 100 & 100 & 100  & 100 & 100 & 100 & 100 & 100  & 99 & 100  & 99 & 100 & 100 & 100  & 98 \\
$LF(0.2)$ & 74 & 65 & 78 & 75 & 84 & 83 & 67& 85& 85 & 80 & 80  & 79 & 74 & 88& 76 & 85 & 87 & 85  \\
$LF(0.5)$ & 84 & 75 & 87 & 86 & 91 & 90 & 79& 92 & 92 & 88 & 88 & 89 & 84 & 93 & 84 & 92 & 92 & 91 \\
$LF(0.8)$& 89 & 82 & 91 & 89 & 94 & 93 & 85 & 94 & 94 & 92 & 92 & 93 & 89 & 96 & 89 & 95 & 95 & 94 \\ \hline
\end{tabular}
}
\end{sidewaystable}

%-----------------------------------------
\section{Data Analysis} \label{data}
In this section, we demonstrate the practical application of the proposed test procedures using real-world data sets.\\

\noindent \textbf{Illustration 1:} 
We analyze the earnings data from the 2022 inaugural season of LIV Golf, presented in \citet{Ngatchou}, and detailed in Table~\ref{DGnew}. According to their analysis, the earnings exhibit a heavily right-skewed distribution, with a few exceptionally large values. Due to this characteristic, the Pareto distribution emerges as a natural and appropriate model for this dataset. Following the approach in \citet{Ngatchou}, our investigation concentrates on earnings that exceed the threshold of 3,500,000. The earnings above this level are selected and subsequently normalized by dividing each amount by 3,500,000.\\

\noindent Let $Y_1, Y_2, \ldots, Y_n$ denote the resulting transformed data, where each value is given by $Y_i = \left(\frac{X_i}{3{,}500{,}000}\right)^{\hat{\alpha}}$, for $i = 1, \ldots, n$, with the estimated shape parameter $\hat{\alpha}_n = 1.428$. Using these transformed values, we compute the proposed test statistic, denoted as \textit{TS}, and compare its performance with several alternative goodness-of-fit tests outlined in Section~\ref{MCstudy}.

\noindent To evaluate the significance of the observed test statistics, we employ a bootstrap method using resamples drawn from the fitted Pareto distribution $\mathcal{P}(\hat{\alpha})$. Table~\ref{SG} summarizes the test statistics along with their associated bootstrap $p$-values. The results indicate that all the $p$-values exceed the conventional 5\% significance level. Consequently, we do not reject the hypothesis that the earnings exceeding 3,500,000 in the 2022 LIV Golf season are consistent with a Pareto distribution. This outcome supports the findings in \citet{Ngatchou}, where no significant departure from the Pareto assumption was detected.

\begin{table}[H]
\begin{center}
\caption{LIV golf earnings data set (see \citet{Ngatchou} and references therein)}
\label{DGnew}
\scalebox{0.85}{
\begin{tabular}{ c c c c c c c }
\hline 
33,509,017 &16,700,083& 13,726,499& 13,254,785 &11,040,714 &9,297,500 &8,068,000 \\
8,033,500 & 7,982,785 &7,345,000 &7,268,167 &5,580,314 &5,518,500 &4,992,618\\
4,849,000  &4,596,000 & 4,454,500 &4,408,000 &4,230,964 &4,195,367 &3,726,583\\
3,558,666 &3,424,333 &3,411,314 &3,399,100& 3,248,000 &3,085,000& 2,948,500 \\
2,944,950& 2,676,500& 2,675,833& 2,569,917 &2,533,833 & 2,421,714& 2,379,700\\
2,188,285& 2,016,350& 1,899,000& 1,861,667& 1,573,000 &1,566,999 &1,471,200 \\
1,377,000 &1,366,000 &1,321,950 &1,274,285& 1,185,000 &1,097,000&1,050,000\\
1,043,000& 1,019,000 &958,750 &958,750 &840,000 &800,000 &800,000\\
683,000 &651,000 &540,000 &530,000 &437,000 &360,000 &324,000 \\
268,000 &250,000 &232,000 &208,750 &190,000 &160,000& 156,000\\
150,000&140,000 &140,000 &133,750& 120,000&&\\
\hline
\end{tabular}
}
\end{center}
\end{table}

\begin{table}[H] 
\begin{center}
\caption{Summary of results for the 2022 season's earnings of LIV golfers exceeding 3,500,000.}
\label{SG}
\scalebox{0.85}{
\begin{tabular}{ l c c c c c }
\hline
Test & TS Value & $p$-value & Test & TS Value & $p$-value \\ \hline
$T_n$ & 0.481  & 0.582 & $T_{n,3,2}^{(1)}$  & 0.001    & 0.615   \\
$KS_n$ & 0.148    & 0.457   & $T_{n,3,2}^{(2)}$   & 0.001    & 0.501   \\
$CM_n$ & 0.068    & 0.556   & $N^{(1)}_{n,2,0.5}$ & 0.380    & 0.670   \\
$AD_n$  & 0.391    & 0.661  & $N^{(1)}_{n,3,0.5}$  & 0.203   & 0.606   \\
$G_{n,2}$  & 0.019    & 0.302   & $N^{(2)}_{n,2,0.5}$  & 0.162    & 0.475   \\
$M_{n,2}$ & 0.011    & 0.327   & $N^{(2)}_{n,3,0.5}$& 0.137    & 0.360   \\
$|I_{n,2}|$  & 0.015    & 0.617   & $\mathscr{DS}_1$ & 0.078    & 0.383   \\
$S_{n,3,2}^{(1)}$  & 0.002  & 0.419   & $\mathscr{DS}_2$  & 0.003    & 0.296   \\
$S_{n,3,2}^{(2)}$  & 0.002    & 0.332   & $\mathscr{DS}_3$  & 0.068  & 0.292  \\    
\hline
\end{tabular}
}
\end{center}
\end{table}

%-----------------------------------------------
%-----------------------------------------------
\vspace{0.5cm}
\noindent \textbf{Illustration 2:} The second example examines the Airplane dataset, previously analyzed in \citet{linhart1986model}, which consists of 30 recorded failure times of an aircraft’s air conditioning system (see Table \ref{DA}). 

\begin{table}[H] 
\begin{center}
\caption{Airplane Data set}
\label{DA}
\begin{tabular}{ c c c c c c c c c c }
\hline
23 &261 &87 &7 &120 &14& 62& 47& 225& 71 \\
246& 21 &42& 20 &5& 12& 120& 11& 3 &14\\
71& 11& 14& 11& 16& 90& 1 &16& 52& 95\\
\hline
\end{tabular}
\end{center}
\end{table}
\noindent Following a procedure similar to that described in Illustration 1, we apply a transformation to the data and estimate the shape parameter as $\hat{\alpha}_n = 0.30$. The test statistic (TS) values are then evaluated, and bootstrap $p$-values are obtained by simulating samples from a Pareto distribution with parameter $\alpha = 0.30$. Since almost all resulting $p$-values fall below the 5\% level of significance, this leads to the rejection of the null hypothesis in every test. Consequently, the analysis suggests that the Pareto model does not provide an adequate fit for the failure times in the Airplane dataset.

\begin{table}[H]
\begin{center}
\caption{Summary of results for Airplane data set}
\label{SA}
\scalebox{0.8}{
\begin{tabular}{c c r c c r } 
\hline
Test  & TS Value & $p$-value & Test   & TS Value & $p$-value \\ \hline
$T_n$ & 8.895 & 0.000 & $T_{n,3,2}^{(1)}$& 0.448 & 0.000 \\
$KS_n$ & 0.377 & 0.000 & $T_{n,3,2}^{(2)}$  & 0.630 & 0.000 \\
$CM_n$ & 1.028 & 0.000 & $N^{(1)}_{n,2,0.5}$ & 3.802 & 0.010 \\
$AD_n$ & 4.771 & 0.000 & $N^{(1)}_{n,3,0.5}$ & 3.323 & 0.001 \\
$G_{n,2}$ & 0.179 & 0.001 & $N^{(2)}_{n,2,0.5}$ & 3.200 & 0.002 \\
$M_{n,2}$ & 0.168 & 0.000 & $N^{(2)}_{n,3,0.5}$ & 2.898 & 0.000 \\
$|I_{n,2}|$ & 0.189 & 0.000 & $\mathscr{DS}_1$ & 0.079 & 0.074 \\
$S_{n,3,2}^{(1)}$ & 0.454 & 0.000 & $\mathscr{DS}_2$ & 0.011 & 0.005 \\
$S_{n,3,2}^{(2)}$ & 0.638 & 0.000 & $\mathscr{DS}_3$  & 0.266 & 0.000\\
\hline 
\end{tabular}
}
\end{center}
\end{table}

\section{Conclusion and Open problems} \label{conclusions}
In this article, we proposed three new gof tests for the Pareto distribution, developed from a novel characterization using Stein’s method and the Laplace transform. The asymptotic properties of the test statistics under the null hypothesis were rigorously derived. To assess their practical performance, we conducted a comprehensive simulation study, comparing the proposed tests with several existing methods. The simulation results indicate that our tests often outperform the competitors in terms of power, suggesting their promise as robust tools for model validation in heavy-tailed settings.\\

\noindent While the current work focuses on scenarios where the scale parameter $\beta$ is known or standardized to 1, a natural extension would be to modify the proposed tests to accommodate the case where $\beta$ is unknown and must be estimated from the data. Addressing this will improve the applicability of the tests in more general situations.

\noindent Additionally, the methodology has been developed for complete data. Extending it to settings involving censored or truncated observations which is common in survival analysis and reliability theory, would be a significant step forward. This direction is particularly relevant given the increasing demand for gof procedures in contexts with incomplete data.

\noindent Moreover, the concept used in this paper can be extended beyond the Laplace transform. For example, one could investigate the use of other integral transforms, such as the Fourier transform or the Mellin transform, in conjunction with Stein’s method to develop new gof tests. This would help assess whether the integration of Stein-type forms into different functional domains enhances sensitivity to deviations from the hypothesized model.

\noindent Finally, applying this characterization-substitution framework to other distributions with known Stein identities (e.g., exponential, gamma, log-normal) may further broaden its applicability, offering a unified perspective for constructing gof tests across distribution families.

%-----------------------------------
\section{Acknowledgements} 
Sakshi Khandelwal would like to express sincere gratitude to the Department of Science and Technology (DST) for the generous support provided through the Inspire Fellowship  (Inspire code: IF220245).
%-----------------------------------
\bibliographystyle{apalike} 
\bibliography{bibitex}

@article{betsch2021fixed,
  title={Fixed point characterizations of continuous univariate probability distributions and their applications},
  author={Betsch, S. and Ebner, B.},
  journal={Annals of the Institute of Statistical Mathematics},
  volume={73},
  pages={31--59},
  year={2021},
  publisher={Springer}
}

@book{Vaart_1998,
title={Asymptotic Statistics}, 
place={Cambridge},
series={Cambridge Series in Statistical and Probabilistic Mathematics},  publisher={Cambridge University Press},
author={Van der Vaart, A. W. }, 
year={1998},
pages={265–290}}

@Manual{Rcore,
  title = {{R: A Language and Environment for Statistical Computing}},
  author = {R Core Team},
 organization = {R Foundation for Statistical Computing},
 address = {Vienna, Austria},
 year = {2021},
 url = {https://www.R-project.org/},
}

@article{zhang2002powerful,
  title={{Powerful goodness-of-fit tests based on the likelihood ratio}},
  author={Zhang, J.},
  journal={Journal of the Royal Statistical Society Series B: Statistical Methodology},
  volume={64},
  number={2},
  pages={281--294},
  year={2002},
  publisher={Oxford University Press}
}

@article{meintanis2009unified,
  title={{A unified approach of testing for discrete and continuous Pareto laws}},
  author={Meintanis, S.G.},
  journal={Statistical Papers},
  volume={50},
  pages={569--580},
  year={2009},
  publisher={Springer}
}

@article{meintanis2014class,
  title={{A class of goodness-of-fit tests based on transformation}},
  author={Meintanis, S.G. and Jim\'enez-Gamero, M. D. and Alba-Fern{\'a}ndez, V.},
  journal={Communications in Statistics-Theory and Methods},
  volume={43},
  number={8},
  pages={1708--1735},
  year={2014},
  publisher={Taylor \& Francis}
}

@article{allison2022distribution,
  title={{Distribution-free goodness-of-fit tests for the Pareto distribution based on a characterization}},
  author={Allison, J. and Milo{\v{s}}evi{\'c}, B. and Obradovi{\'c}, M. and Smuts, M.},
  journal={Computational Statistics},
  volume={37},
  number={1},
  pages={403--418},
  year={2022},
  publisher={Springer}
}

@article{ndwandwe2023new,
  title={On a new class of tests for the Pareto distribution using Fourier methods},
  author={Ndwandwe, L.M. and Allison, J.S. and Smuts, M. and Visagie, I.J.H.},
  journal={Stat},
  volume={12},
  number={1},
  pages={e566},
  year={2023},
  publisher={Wiley Online Library}
}

@article{volkova2016goodness,
  title={{Goodness-of-fit tests for the Pareto distribution based on its characterization}},
  author={Volkova, K.},
  journal={Statistical Methods \& Applications},
  volume={25},
  number={3},
  pages={351--373},
  year={2016},
  publisher={Springer}
}

@article{ndwandwe2023newa,
  title={Testing for the Pareto type I distribution: A comparative study},
  author={Ndwandwe, L.  and Allison, J.S. and Santana, L. and Visagie, I.J.H.},
  journal={Metron},
  volume={81},
  pages={215-256},
  year={2023},
}

@article{obradovic2015asymptotic,
  title={{On asymptotic efficiency of goodness of fit tests for Pareto distribution based on characterizations}},
  author={Obradovi{\'c}, M.},
  journal={Filomat},
  volume={29},
  number={10},
  pages={2311--2324},
  year={2015},
  publisher={JSTOR}
}

@book{pareto1896cours,
  title={{Cours d'{\'e}conomie politique: profess{\'e} {\`a} l'Universit{\'e} de Lausanne}},
  author={Pareto, V.},
  volume={1},
  year={1896},
  publisher={F. Rouge}
}

@article{simon1955class,
  title={{On a class of skew distribution functions}},
  author={Simon, H. A.},
  journal={Biometrika},
  volume={42},
  number={3/4},
  pages={425--440},
  year={1955},
  publisher={JSTOR}
}

@article{rydberg2000realistic,
  title={{Realistic statistical modelling of financial data}},
  author={Rydberg, T. H.},
  journal={International Statistical Review},
  volume={68},
  number={3},
  pages={233--258},
  year={2000},
  publisher={Wiley Online Library}
}

@article{malik1970characterization,
  title={{A characterization of the Pareto distribution}},
  author={Malik, H. J.},
  journal={Scandinavian Actuarial Journal},
  volume={1970},
  number={3-4},
  pages={115--117},
  year={1970},
  publisher={Taylor \& Francis}
}

@article{ahsanullah1974characterization,
  title={{A characterization of the power function distribution}},
  author={Ahsanullah, M. and Kabir, A.B.M.L.},
  journal={The Canadian Journal of Statistics/La Revue Canadienne de Statistique},
  pages={95--98},
  year={1974},
  publisher={JSTOR}
}

@article{nofal2017new,
  title={{New characterizations of the Pareto distribution}},
  author={Nofal, Z.M. and El Gebaly, Y. M.},
  journal={Pakistan Journal of Statistics and Operation Research},
  pages={63--74},
  year={2017},
  publisher={College of Statistical and Actuarial Sciences}
}

@article{Ngatchou,
author = {Ngatchou–Wandji,J. and Nombebe, T. and Santana, L. and Allison, J.S.},
title = {{On classes of consistent tests for the Type I Pareto distribution based on a characterization involving order statistics}},
journal = {Statistics},
volume = {58},
number = {3},
pages = {521--551},
year = {2024},
publisher = {Taylor \& Francis},
doi = {10.1080/02331888.2024.2347342},
}

@book{linhart1986model,
  title={Model selection.},
  author={Linhart, Heinz and Zucchini, Walter},
  year={1986},
  publisher={John Wiley \& Sons}
}

@article{ebner2023goodness,
  title={Goodness-of-fit tests for the Weibull distribution based on the Laplace transform and Stein’s method},
  author={Ebner, Bruno and Fischer, Adrian and Henze, Norbert and Mayer, Celeste},
  journal={Annals of the Institute of Statistical Mathematics},
  volume={75},
  number={6},
  pages={1011--1038},
  year={2023},
  publisher={Springer}
}

@book{embrechts1997modelling,
  title     = {Modelling Extremal Events: for Insurance and Finance},
  author    = {Embrechts, Paul and Kl{\"u}ppelberg, Claudia and Mikosch, Thomas},
  year      = {1997},
  publisher = {Springer},
  address   = {Berlin, Heidelberg}
}

@article{katz2002statistics,
  title={Statistics of extremes in hydrology},
  author={Katz, Richard W and Parlange, Marc B and Naveau, Philippe},
  journal={Advances in water resources},
  volume={25},
  number={8-12},
  pages={1287--1304},
  year={2002},
  publisher={Elsevier}
}

@article{choulakian2001goodness,
  title={Goodness-of-fit tests for the generalized Pareto distribution},
  author={Choulakian, Vartan and Stephens, Michael A},
  journal={Technometrics},
  volume={43},
  number={4},
  pages={478--484},
  year={2001},
  publisher={Taylor \& Francis}
}

@book{castillo2012extreme,
  title     = {Extreme Value and Related Models with Applications in Engineering and Science},
  author    = {Castillo, Enrique and Hadi, Ali S. and Balakrishnan, N. and Sarabia, Jos{\'e} M.},
  year      = {2012},
  publisher = {John Wiley \& Sons},
  address   = {Hoboken, NJ},
  isbn      = {978-0-470-17026-6}
}

@inproceedings{stein1972bound,
  author    = {Stein, Charles},
  title     = {A Bound for the Error in the Normal Approximation to the Distribution of a Sum of Dependent Random Variables},
  booktitle = {Proceedings of the Sixth Berkeley Symposium on Mathematical Statistics and Probability},
  volume    = {2},
  editor    = {Le Cam, Lucien M. and Neyman, Jerzy and Scott, Elizabeth L.},
  pages     = {583--602},
  publisher = {University of California Press},
  address   = {Berkeley},
  year      = {1972},
  doi       = {10.1525/9780520423671-036}
}

@article{kumari2023goodness,
  title={Goodness-of-fit test for one-sided l{\'e}vy distribution based on stein’s characterization},
  author={Kumari, Aditi and Sudheesh, KK and Bhati, Deepesh},
  journal={Journal of the Indian Society for Probability and Statistics},
  volume={24},
  number={2},
  pages={377--391},
  year={2023},
  publisher={Springer}
}

@article{bhati2025new,
  title={New goodness of fit tests for the Pareto distribution using Stein’s characterization for uncensored and random right censored data},
  author={Bhati, Deepesh and Batsidis, Apostolos and Khandelwal, Sakshi},
  journal={Communications in Statistics-Theory and Methods},
  pages={1--25},
  year={2025},
  publisher={Taylor \& Francis}
}

@book{widder1941laplace,
  title     = {The Laplace Transform},
  author    = {Widder, David Vernon},
  year      = {1941},
  publisher = {Princeton University Press},
  address   = {Princeton, NJ}
}

@article{baringhaus1991class,
  title={A class of consistent tests for exponentiality based on the empirical Laplace transform},
  author={Baringhaus, Ludwig and Henze, Norbert},
  journal={Annals of the Institute of Statistical Mathematics},
  volume={43},
  pages={551--564},
  year={1991},
  publisher={Springer}
}

@article{henze1992new,
  title={A new flexible class of omnibus tests for exponentiality},
  author={Henze, Norbert},
  journal={Communications in Statistics-Theory and Methods},
  volume={22},
  number={1},
  pages={115--133},
  year={1992},
  publisher={Taylor \& Francis}
}

@article{jimenez2009goodness,
  title={Goodness-of-fit tests based on empirical characteristic functions},
  author={Jim{\'e}nez-Gamero, Mar{\'\i}a-Dolores and Alba-Fern{\'a}ndez, V and Mu{\~n}oz-Garc{\'\i}a, Joaqu{\'\i}n and Chalco-Cano, Yurilev},
  journal={Computational Statistics \& Data Analysis},
  volume={53},
  number={12},
  pages={3957--3971},
  year={2009},
  publisher={Elsevier}
}

@book{nikitin1995asymptotic,
  title     = {Asymptotic Efficiency of Nonparametric Tests},
  author    = {Nikitin, Ya. Yu.},
  year      = {1995},
  publisher = {Cambridge University Press},
  address   = {Cambridge},
  doi       = {10.1017/CBO9780511530081}
}

\end{document}